\documentclass[12pt]{article}
 \newtheorem{lem}{Lemma}
    \newtheorem{cor}{Corollary}
    \newtheorem{theo}{Theorem}    
    
    \newtheorem{rem}{Remark}
    \newtheorem{prop}{Proposition}
    \newenvironment{proof}{\noindent{\it Proof.\ }}{\hfill
    {\mbox{$\square$}}\medskip}
 \usepackage{latexsym,amssymb}
 \usepackage{amsmath}
 \setlength{\textwidth}{15cm}

  \newcommand{\g}{\`}

\title{A classification of certain\\almost $\alpha$-Kenmotsu manifolds}
\author{Giulia Dileo}
\date{}


\begin{document}
\maketitle

\textbf{Abstract.} {\small
We study $\mathcal D$-homothetic deformations of almost $\alpha$-Kenmotsu structures. We characterize almost contact metric manifolds which are $CR$-integrable almost $\alpha$-Kenmotsu manifolds, through the existence of a canonical linear connection, invariant under $\mathcal D$-homothetic deformations.
If the canonical connection associated to the structure $(\varphi,\xi,\eta,g)$ has parallel torsion and curvature, then the local geometry is completely determined by the dimension of the manifold and the spectrum of the operator $h'$ defined by $2\alpha h'=({\mathcal L}_\xi\varphi)\circ\varphi$.
In particular, the manifold is locally equivalent to a Lie group endowed with a left invariant almost $\alpha$-Kenmotsu structure. In the case of almost $\alpha$-Kenmotsu $(\kappa,\mu)'$-spaces, this classification gives rise to a scalar invariant depending on the real numbers $\kappa$ and $\alpha$.\\\\
2010 \textit{Mathematics Subject Classification}: 53C15; 53C25.\\
\textit{Key words and phrases}: almost $\alpha$-Kenmotsu manifolds, $\mathcal D$-homothetic deformations, $CR$-manifolds, $\eta$-parallel structures, nullity distributions.}

\section*{Introduction}
Almost Kenmotsu manifolds are a special class of almost contact metric manifolds, recently investigated in \cite{OLS1,KP,DP,DPnull,DPeta}.
An almost contact metric manifold $(M^{2n+1},\varphi,\xi,\eta,g)$ is said to be an almost Kenmotsu manifold if $d\eta=0$ and $d\Phi=2\eta\wedge \Phi$, where $\Phi$ is the fundamental $2$-form associated to the structure.
Normal almost Kenmotsu manifolds are known as Kenmotsu manifolds \cite{Ken}: they set up one of the three classes of almost contact metric manifolds whose automorphism group attains the maximum dimension \cite{Ta}.

The class of almost Kenmotsu manifolds is not invariant with respect to $\mathcal D$-homothetic deformations, that is changes of the structure tensors of the form
\begin{equation}\label{deformation}
\bar\varphi=\varphi,\qquad \bar\xi=\frac{1}{\beta}\xi,\qquad \bar\eta=\beta\eta,\qquad\bar g=\beta g+\beta(\beta-1)\eta\otimes\eta,
\end{equation}
where $\beta$ is a positive constant. These deformations were introduced by Tanno in \cite{Tan1} and largely studied for the class of contact metric manifolds. Indeed, for an almost contact metric structure, such a change preserves the property of being contact metric, K-contact, Sasakian or strongly pseudo-convex $CR$, and the property for the characteristic vector field of a contact metric structure to belong to the $(\kappa,\mu)$-nullity distribution. In \cite{Bo_classification} E. Boeckx provides a full classification of non-Sasakian contact metric $(\kappa,\mu)$-spaces up to $\mathcal D$-homothetic deformations. He associates to each non-Sasakian $(\kappa,\mu)$-space $M$ an invariant $I_M$ depending on the real numbers $\kappa,\mu$, and provides an explicit example of such a space for every dimension $2n+1$ and for every value of the invariant.

In this paper we consider the class of almost $\alpha$-Kenmotsu manifolds \cite{OLS1,KP,JanVan}. They are almost contact metric manifolds with structure $(\varphi,\xi,\eta,g)$ such that $d\eta=0$ and $d\Phi=2\alpha\eta\wedge\Phi$, $\alpha$ being a non-zero real constant.
Applying deformation \eqref{deformation}, one obtains an almost $\frac{\alpha}{\beta}$-Kenmotsu structure.

After some preliminaries on general properties of almost $\alpha$-Kenmotsu manifolds, dealing with the Levi-Civita connection and the Riemannian curvature, also under the hypothesis of local symmetry, we shall focus on some properties which are invariant under $\mathcal D$-homothetic deformations. The first one is the $\eta$-parallelism of the operator $h'=\frac{1}{2\alpha}({\mathcal L}_\xi\varphi)\circ\varphi$, where $\mathcal L$ denotes the Lie derivative. The vanishing of the covariant derivative $\nabla_\xi h'$ is also an invariant property.
If both these conditions are satisfied and $h'\ne0$, then the spectrum of $h'$ is of type $\{0,\lambda_1,-\lambda_1,\dots,\lambda_r,-\lambda_r\}$, each $\lambda_i$ being a positive constant. Denoting by $[0]$ the distribution of the eigenvectors of $h'$ with eigenvalue $0$ and orthogonal to $\xi$, and by $[\lambda_i]$ and $[-\lambda_i]$ the eigendistributions with eigenvalues $\lambda_i$ and $-\lambda_i$ respectively, the  manifold is locally the warped product
$$M'\times_{f_0}M_0\times_{f_1} M_{\lambda_1}\times_{g_1} M_{-\lambda_1}\times_{f_2}\ldots\times_{f_r} M_{\lambda_r}\times_{g_r} M_{-\lambda_r},$$
where $M'$ is an open interval, $M_0$, $M_{\lambda_i}$ and $M_{-\lambda_i}$ are integral submanifolds of the distributions $[0]$, $[\lambda_i]$ and $[-\lambda_i]$.  The warping functions are $f_0=c_0e^{\alpha t}$, $f_i=c_ie^{\alpha(1+\lambda_i)t}$ and $g_i=c'_ie^{\alpha(1-\lambda_i)t}$, with $c_0$, $c_i$ and $c'_i$ positive constants. Moreover, $M_0$ is an almost K\"ahler manifold and the structure is $CR$-integrable if and only if $0$ is a simple eigenvalue or $M_0$ is a K\"ahler manifold (Theorem \ref{theo_warped_product}).

As a special case, we shall consider almost $\alpha$-Kenmotsu manifolds whose characteristic vector field $\xi$ belongs to the $(\kappa,\mu)'$-nullity distribution, that is, for some real numbers $\kappa$, $\mu$, the Riemannian curvature satisfies
\begin{equation}\label{kmu}
R_{XY}\xi=\kappa(\eta(Y)X-\eta(X)Y)+\mu(\eta(Y)h'X-\eta(X)h'Y)
\end{equation}
for all vector fields $X$ and $Y$. Applying a $\mathcal D$-homothetic deformation, condition \eqref{kmu} is preserved up to a change of the real numbers $\kappa,\mu$. We shall see that, for an almost $\alpha$-Kenmotsu $(\kappa,\mu)'$-space, the operator $h'$ is $\eta$-parallel and $\nabla_\xi h'=0$. We also prove that $\kappa\leq-\alpha^2$. If $\kappa=-\alpha^2$, then $h'=0$. If $\kappa<-\alpha^2$ then $\mu=-2\alpha^2$, the structure is $CR$-integrable and the Riemannian curvature is completely determined (Theorem \ref{integrability}).

In order to obtain a local classification of the above manifolds, up to $\mathcal D$-homothetic deformations, we consider in section \ref{section_canonic} an invariant linear connection, called the canonical connection, which was introduced in \cite{DPnull} for almost Kenmotsu manifolds. The existence of this connection characterizes almost contact metric manifolds which are $CR$-integrable almost $\alpha$-Kenmotsu manifolds; it can be viewed as the analogue of the Tanaka-Webster connection in contact geometry. In \cite{BoCho_symm} E. Boeckx and J. T. Cho study Tanaka-Webster parallel spaces, i.e. $CR$-integrable contact metric manifolds for which the Tanaka-Webster connection has parallel torsion and curvature tensors; they prove that these spaces are Sasakian locally $\varphi$-symmetric spaces or non-Sasakian contact metric manifolds such that the characteristic vector field belongs to the $(\kappa,2)$-nullity distribution.

Considering the canonical connection $\widetilde\nabla$ of a $CR$-integrable almost $\alpha$-Kenmotsu manifold, we prove that the torsion $\widetilde T$ is parallel with respect to $\widetilde\nabla$ if and only if the tensor field $h'$ is $\eta$-parallel and satisfies $\nabla_\xi h'=0$.
If, furthermore, the curvature tensor $\widetilde R$ satisfies $\widetilde\nabla\widetilde R=0$, then $\widetilde R$ vanishes and this occurs if and only if
$0$ is a simple eigenvalue of $h'$ or the integral submanifolds of the distribution $[\xi]\oplus[0]$ have constant Riemannian curvature $k=-\alpha^2$ (Theorem \ref{theo_curv}). For a fixed dimension of the manifold, supposing $\widetilde\nabla\widetilde T=0$ and $\widetilde\nabla\widetilde R=0$, we prove that the local geometry is completely determined, up to $\mathcal D$-homothetic deformations, by the spectrum of the operator $h'$ (Theorem \ref{theo_class}). In particular, the manifold is locally equivalent to a solvable non-nilpotent Lie group, which is a subgroup of the affine group $\mbox{\emph{Aff}}(2n+1,\mathbb{R})$,  endowed with a left invariant almost $\alpha$-Kenmotsu structure, whose canonical connection coincides with the left invariant linear connection.

Applying the above classification to almost $\alpha$-Kenmotsu $(\kappa,\mu)'$-spaces, with non-vanishing $h'$, we obtain a scalar invariant $I_M$, depending on the real numbers $\kappa$ and $\alpha$. Together with the dimension of the manifold, $I_M$ determines the local structure up to $\mathcal D$-homothetic deformations. We also show that such a manifold is locally $\mathcal D$-conformal to an almost cosymplectic manifold whose characteristic vector field $\xi$ belongs to the $\kappa_c$-nullity distribution, with $\kappa_c=\kappa+\alpha^2$.

\section{Preliminaries}

An almost contact metric manifold is a differentiable manifold $M^{2n+1}$ endowed with a
structure $(\varphi, \xi, \eta, g)$, given by a tensor field $\varphi$ of type $(1,1)$, a
vector field $\xi$, a $1$-form $\eta$ and a Riemannian metric
$g$ satisfying
\[\varphi^2={}-I+\eta\otimes\xi,\quad \eta(\xi)=1,\quad \varphi(\xi)=0,\quad \eta\circ\varphi=0,\]
\[g(\varphi X,\varphi Y)=g(X,Y)-\eta(X)\eta(Y)\qquad \forall X,Y\in\frak{X}(M).\]
Such a structure is said to be $CR$-integrable if the associated almost $CR$-structure $({\mathcal D},J)$ is integrable, where ${\mathcal D}=Im(\varphi)=Ker(\eta)$ is the $2n$-dimensional distribution orthogonal to $\xi$ and $J$ is the restriction of $\varphi$ to $\cal D$. The structure is normal if the tensor field $N=[\varphi,\varphi]+2d\eta\otimes\xi$ identically vanishes, where $[\varphi,\varphi]$ is
the Nijenhuis torsion of $\varphi$. It is well known that normal almost contact metric manifolds are $CR$-manifolds \cite{Ia}. We refer to \cite{Bl1,Bl2} for more details.

An almost $\alpha$-Kenmotsu manifold is an almost contact metric manifold $M^{2n+1}$ with structure $(\varphi,\xi,\eta,g)$ such that
\begin{equation}\label{def_alpha}
d\eta=0,\qquad d\Phi=2\alpha\eta\wedge \Phi,
\end{equation}
where $\alpha$ is a non-zero real constant and $\Phi$ is the fundamental $2$-form
defined by $\Phi(X,Y)=g(X,\varphi Y)$ for any vector
fields $X$ and $Y$. Normal almost $\alpha$-Kenmotsu manifolds are known as $\alpha$-Kenmotsu manifolds.
Let us consider the $(1,1)$-tensor field
\[h':=\frac{1}{2\alpha}({\mathcal L}_\xi\varphi)\circ\varphi.\]
This operator satisfies $h'(\xi)=0$, it is symmetric and anticommutes with $\varphi$. If $X$ is an eigenvector of $h'$ with eigenvalue $\lambda$, then $\varphi X$ is an eigenvector with eigenvalue $-\lambda$, and thus $\lambda$ and $-\lambda$ have the same multiplicity. If $\lambda\ne 0$, we denote by $[\lambda]$ the distribution of the eigenvectors of $h'$ with eigenvalue $\lambda$; if $\lambda=0$, we denote by $[0]$ the distribution of the eigenvectors of $h'$ with eigenvalue $0$ and orthogonal to $\xi$, which has even rank.

The Levi-Civita connection of $g$ satisfies $\nabla_\xi\varphi=0$, which implies that $\nabla_\xi\xi=0$ and $\nabla_\xi X\in{\mathcal D}$ for any $X\in\mathcal D$. Moreover,
$\nabla_X\xi=\alpha(X+h'X-\eta(X)\xi)$
for any vector field $X$, or equivalently,
\begin{equation}\label{nablaeta}
(\nabla_X\eta)(Y)=\alpha g(X+h'X,Y)-\alpha\eta(X)\eta(Y)
\end{equation}
for all vector fields $X,Y$. From \eqref{def_alpha} it follows that the distribution $\mathcal D$ is integrable with almost K\"ahler leaves. The mean curvature vector field of the integral manifolds of $\mathcal D$ is $H=-\alpha \xi$ and these manifolds are totally umbilical if and only if $h'=0$  \cite{KP}.

An almost $\alpha$-Kenmotsu structure is $CR$-integrable if and only if the tensor $N$ vanishes on $\mathcal D$, or equivalently, the integral manifolds of $\mathcal D$ are K\"ahler manifolds.
In terms of the Levi-Civita connection, the $CR$-integrability of the structure can be characterized by the condition
\begin{equation}\label{nablaphi}
(\nabla_X\varphi)(Y)=\alpha g(\varphi X+\varphi h'X,Y)\xi-\alpha\eta(Y)(\varphi X+\varphi h'X)
\end{equation}
for all vector fields $X,Y$, which is equivalent to the $\eta$-parallelism of the tensor field $\varphi$, that is $g((\nabla_X \varphi)Y,Z)=0$ for any vector fields $X,Y,Z$ orthogonal to $\xi$.

Analogously, the operator $h'$ is said to be $\eta$-parallel if $g((\nabla_Xh')Y,Z)=0$ for every vector fields $X,Y,Z$ orthogonal to $\xi$, and this is equivalent to requiring that
\begin{equation}\label{nablah}
(\nabla_Xh')Y=-\alpha g(Y,h'X+h'^2X)\xi-\alpha\eta(Y)(h'X+h'^2X)+\eta(X)(\nabla_\xi h')Y
\end{equation}
for every vector fields $X,Y$.

Most of the results proved in \cite{DP} for the class of almost Kenmotsu manifolds can be generalized to the class of almost $\alpha$-Kenmotsu manifolds. We omit the proofs since they are similar.
\begin{theo}\label{warped_almost}
Let $(M^{2n+1},\varphi,\xi,\eta,g)$ be an almost $\alpha$-Kenmotsu manifold such that $h'=0$. Then $M^{2n+1}$ is locally a warped product $M'\times_fN^{2n}$, where $N^{2n}$ is an almost K\"ahler manifold, $M'$ is an open interval with coordinate $t$, $f=ce^{\alpha t}$, for some positive constant $c$.
\end{theo}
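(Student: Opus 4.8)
The plan is to realize $M$ locally as a warped product by exhibiting two orthogonal complementary foliations with the appropriate extrinsic geometry and then invoking the warped-product characterization (Hiepko's theorem). Since $g(X,\xi)=\eta(X)$, the vector field $\xi$ is a unit field with $\eta=g(\xi,\cdot)$; because $d\eta=0$ the distribution $\mathcal D=\ker\eta$ is integrable, and $\xi$ spans its orthogonal complement. Setting $h'=0$ in the formula $\nabla_X\xi=\alpha(X+h'X-\eta(X)\xi)$ gives $\nabla_X\xi=\alpha(X-\eta(X)\xi)$; in particular $\nabla_\xi\xi=0$, so the integral curves of $\xi$ are geodesics and the one-dimensional foliation $[\xi]$ is totally geodesic.

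Next I would analyze the foliation by leaves of $\mathcal D$. For sections $X,Y$ of $\mathcal D$ one has $g(Y,\xi)=0$ identically, hence $g(\nabla_XY,\xi)=-g(Y,\nabla_X\xi)=-\alpha g(X,Y)$, using $\eta(X)=0$. Thus the second fundamental form of the leaves of $\mathcal D$ with respect to the unit normal $\xi$ is $II(X,Y)=-\alpha g(X,Y)\xi$, so the leaves are totally umbilical with mean curvature vector $H=-\alpha\xi$, in agreement with the preliminaries. To upgrade \emph{umbilical} to \emph{spherical}, I would check that $H$ is parallel in the normal bundle: for $X\in\mathcal D$ we have $\nabla_X\xi=\alpha X\in\mathcal D$, so the $\xi$-component of $\nabla_X\xi$ vanishes, i.e.\ $\nabla^\perp_X\xi=0$; since $\alpha$ is constant this gives $\nabla^\perp_XH=0$, and the foliation $\mathcal D$ is spherical.

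Having a totally geodesic foliation $[\xi]$ and a complementary spherical foliation $\mathcal D$, Hiepko's theorem yields that $M$ is locally isometric to a warped product $M'\times_fN^{2n}$, where $M'$ is a leaf of $[\xi]$ (an open interval with arclength coordinate $t$ and $\partial_t=\xi$) and $N^{2n}$ is a leaf of $\mathcal D$. To determine $f$ I would compare the two expressions for the second fundamental form of the fibres: O'Neill's warped-product formula gives $II(X,Y)=-(f'/f)\,g(X,Y)\,\xi$, and matching this against $-\alpha g(X,Y)\xi$ forces $f'/f=\alpha$, whence $f=ce^{\alpha t}$ with $c>0$. Finally, $N^{2n}$ carries the induced almost complex structure $\varphi|_{\mathcal D}$; since the leaves of $\mathcal D$ are almost K\"ahler, as recalled in the preliminaries after \eqref{def_alpha}, and since on each fibre $g|_{\mathcal D}$ and the fibre metric $g_N$ differ by the constant factor $f^2$, the induced structure is a constant homothety of an almost K\"ahler structure and is therefore again almost K\"ahler.

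I expect the decisive steps to be the verification of sphericity for $\mathcal D$ (the vanishing $\nabla^\perp_X\xi=0$), which is exactly what distinguishes a genuine warped product from a merely twisted product and legitimizes the use of Hiepko's theorem, together with the careful bookkeeping of conventions needed to extract $f=ce^{\alpha t}$ from the comparison of second fundamental forms. Both ultimately reduce to the identity $\nabla_X\xi=\alpha(X-\eta(X)\xi)$, so once that is in hand the remaining computations are routine.
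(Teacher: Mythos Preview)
Your argument is correct. The paper itself omits the proof of this theorem, stating only that the results from \cite{DP} for almost Kenmotsu manifolds generalize to the almost $\alpha$-Kenmotsu case with similar proofs; your direct verification via Hiepko's criterion (totally geodesic $[\xi]$, spherical $\mathcal D$, then identification of $f$ from the mean curvature) is exactly the method the paper employs elsewhere, namely in the proof of Theorem~\ref{theo_warped_product}, and is certainly the intended argument.
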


\begin{prop}\label{AKK}
Let $(M^{2n+1},\varphi, \xi, \eta, g)$  be an almost $\alpha$-Kenmotsu
manifold such that the integral manifolds of $\cal D$ are K\"ahler.
Then, $M^{2n+1}$ is an $\alpha$-Kenmotsu manifold if and only if $h'=0$, or equivalently, $\nabla \xi
=-\alpha\varphi^2$. Therefore, a $3$-dimensional almost $\alpha$-Kenmotsu manifold such that $h'=0$ is an $\alpha$-Kenmotsu manifold.
\end{prop}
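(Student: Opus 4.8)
The plan is to exploit the hypothesis that the integral manifolds of $\mathcal D$ are K\"ahler, which by the discussion preceding the statement is precisely the $CR$-integrability of the structure, so that the covariant derivative of $\varphi$ is given by \eqref{nablaphi}. Since $d\eta=0$, the normality tensor reduces to $N=[\varphi,\varphi]$, and I would compute it through the standard expression of the Nijenhuis torsion in terms of a torsion-free connection,
\[
N(X,Y)=(\nabla_{\varphi X}\varphi)Y-(\nabla_{\varphi Y}\varphi)X+\varphi(\nabla_Y\varphi)X-\varphi(\nabla_X\varphi)Y,
\]
valid for all vector fields $X,Y$. As $N$ is a tensor, it is determined by its values on $\mathcal D\times\mathcal D$ and on $\mathcal D\times[\xi]$, so it suffices to evaluate these.

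First I would substitute \eqref{nablaphi} into the four terms, writing $AX:=\varphi X+\varphi h'X$ so that $(\nabla_X\varphi)Y=\alpha g(AX,Y)\xi-\alpha\eta(Y)AX$; note that $\varphi$ annihilates the $\xi$-component of each derivative. For $X,Y\in\mathcal D$ one computes, using $h'\varphi=-\varphi h'$ and $\varphi^2=-I+\eta\otimes\xi$, that $A\varphi X=-X+h'X$, whence the two surviving terms combine into $\alpha\big(g(h'X,Y)-g(h'Y,X)\big)\xi$, which vanishes by the symmetry of $h'$; this re-confirms $N\equiv0$ on $\mathcal D$, as it must be under $CR$-integrability. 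The decisive step is the mixed term: setting $Y=\xi$ and $X\in\mathcal D$, the $\xi$-valued contributions drop out (since $A\varphi X\in\mathcal D$, $\varphi\xi=0$ and $\nabla_\xi\varphi=0$), leaving $N(X,\xi)=-\alpha A\varphi X+\alpha\varphi AX$. Using $A\varphi X=-X+h'X$ and $\varphi AX=-X-h'X$, this collapses to
\[
N(X,\xi)=-2\alpha h'X,\qquad X\in\mathcal D.
\]

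Because $\alpha\neq0$, this identity shows that $N$ vanishes identically if and only if $h'X=0$ for every $X\in\mathcal D$, that is $h'=0$ (recall $h'\xi=0$); equivalently $M^{2n+1}$ is normal, hence $\alpha$-Kenmotsu, exactly when $h'=0$. The equivalence with $\nabla\xi=-\alpha\varphi^2$ is then immediate from $\nabla_X\xi=\alpha(X+h'X-\eta(X)\xi)$ and $-\alpha\varphi^2X=\alpha(X-\eta(X)\xi)$: the two coincide for all $X$ precisely when $\alpha h'=0$, i.e. $h'=0$. For the three-dimensional assertion I would observe that the leaves of $\mathcal D$ are then two-dimensional, and on a surface every almost K\"ahler structure is automatically K\"ahler, since the Nijenhuis tensor of an almost complex structure vanishes identically in real dimension two. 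Thus a $3$-dimensional almost $\alpha$-Kenmotsu manifold is always $CR$-integrable, so the preceding argument applies verbatim and $h'=0$ forces $N=0$, making $M^3$ an $\alpha$-Kenmotsu manifold.

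The main obstacle is the bookkeeping in the mixed term $N(X,\xi)$: one must carefully separate the $\xi$- and $\mathcal D$-valued outputs of \eqref{nablaphi} and repeatedly invoke the anticommutation $h'\varphi=-\varphi h'$ together with $\varphi^2=-I+\eta\otimes\xi$ to arrive at the clean coefficient $-2\alpha$. Everything else, including the equivalence with $\nabla\xi=-\alpha\varphi^2$ and the reduction in dimension three, is formal.
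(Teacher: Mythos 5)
Your proposal is correct, and every step checks out: the expression of the Nijenhuis torsion through the Levi-Civita connection is the right one, the reduction of $N$ to its values on $\mathcal D\times\mathcal D$ and $\mathcal D\times[\xi]$ is legitimate since $N$ is a skew-symmetric tensor and $TM=\mathcal D\oplus[\xi]$, the identities $A\varphi X=-X+h'X$ and $\varphi AX=-X-h'X$ are accurate, and the key formula $N(X,\xi)=-2\alpha h'X$ follows as you say; the equivalence with $\nabla\xi=-\alpha\varphi^2$ and the two-dimensional integrability argument for the $3$-dimensional case are also fine. The comparison with the paper is peculiar here: the paper gives no proof of this Proposition at all, stating only that the results of \cite{DP} (the case $\alpha=1$) ``can be generalized'' with similar proofs, its standard device for such transfers being the $\mathcal D$-homothetic deformation with $\beta=\alpha$ introduced in the next section. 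So your argument is a genuinely self-contained alternative rather than a reproduction, and it buys a proof that never leaves the given structure. One streamlining worth noting: since $d\eta=0$, the definition of the Nijenhuis torsion gives directly $N(X,\xi)=\varphi\bigl((\mathcal L_\xi\varphi)X\bigr)$, and from $h'=\frac{1}{2\alpha}(\mathcal L_\xi\varphi)\circ\varphi$ together with $(\mathcal L_\xi\varphi)\xi=0$ one gets $\mathcal L_\xi\varphi=-2\alpha h'\varphi$, hence $N(X,\xi)=-2\alpha h'X$ on \emph{any} almost $\alpha$-Kenmotsu manifold, with no appeal to \eqref{nablaphi}. This shows that the implication ``normal $\Rightarrow h'=0$'' needs no hypothesis on the leaves of $\mathcal D$; the K\"ahler assumption (equivalently, the vanishing of $N$ on $\mathcal D\times\mathcal D$) is used only for the converse, a point your bookkeeping through \eqref{nablaphi} slightly obscures.
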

Consequently, an $\alpha$-Kenmotsu manifold $M^{2n+1}$ is a warped product of type $M'\times_fN^{2n}$, where $M'$ is an open interval, $N^{2n}$ is a K\"ahler manifold and $f=ce^{\alpha t}$, for some positive constant $c$.

As regards the Riemannian curvature of an almost $\alpha$-Kenmotsu manifold, an easy computation shows that
\begin{equation}\label{RXYxi}
R_{XY}\xi=\alpha^2(\eta(X)(Y+h'Y)-\eta(Y)(X+h'X))+\alpha((\nabla_Xh')Y-(\nabla_Yh')X)
\end{equation}
for every vector fields $X,Y$, which implies that
\[R_{\xi X}\xi=\alpha^2(-\varphi^2X+2h'X+h'^2X)+\alpha(\nabla_\xi h')X.\]
If the almost $\alpha$-Kenmotsu manifold is locally symmetric, then the operator $h'$ satisfies $\nabla_\xi h'=0$, and for any unit eigenvector $X$ of $h'$ with eigenvalue $\lambda$, the $\xi$-sectional curvature is given by
\[K(\xi,X)=-\alpha^2(1+\lambda)^2,\]
which implies that $Ric(\xi,\xi)<0$. The geometry of a locally symmetric almost $\alpha$-Kenmotsu manifold is quite different in the two cases with vanishing or non-vanishing $h'$. Indeed, we have the following results.
\begin{theo}\label{h=0+sym}
Let $(M^{2n+1},\varphi, \xi, \eta, g)$ be a locally symmetric almost
$\alpha$-Kenmotsu manifold. Then, $M^{2n+1}$ is an $\alpha$-Kenmotsu manifold if and only if $h'=0$; in this case the manifold has
constant sectional curvature $k=-\alpha^2$.
\end{theo}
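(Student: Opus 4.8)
The plan is to prove the two implications of the equivalence separately and to extract the curvature statement along the way. The implication ``$\alpha$-Kenmotsu $\Rightarrow h'=0$'' is immediate and does not require local symmetry: an $\alpha$-Kenmotsu manifold is normal, hence $CR$-integrable, so the leaves of $\mathcal D$ are K\"ahler, and Proposition \ref{AKK} then forces $h'=0$. Equivalently, one notes that $\alpha$-Kenmotsu implies $\nabla_X\xi=-\alpha\varphi^2X$, which by the general formula $\nabla_X\xi=\alpha(X+h'X-\eta(X)\xi)$ is exactly the condition $h'=0$.

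For the converse I would first show that, under local symmetry, $h'=0$ by itself forces constant sectional curvature $-\alpha^2$. Since $h'$ vanishes identically, so does $\nabla h'$, so \eqref{RXYxi} collapses to $R_{XY}\xi=\alpha^2(\eta(X)Y-\eta(Y)X)$, while \eqref{nablaeta} gives $(\nabla_Z\eta)(X)=\alpha g(Z,X)-\alpha\eta(Z)\eta(X)$ and $\nabla_Z\xi=\alpha(Z-\eta(Z)\xi)$. I would then differentiate the expression for $R_{XY}\xi$ and impose $(\nabla_Z R)_{XY}\xi=0$, evaluating at a point in a frame with $\nabla_Z X=\nabla_Z Y=0$. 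Substituting the derivatives above, every term carrying a factor $\eta(Z)$ cancels in pairs, and after dividing by $\alpha\neq0$ one is left with
\[
R_{XY}Z=-\alpha^2\big(g(Y,Z)X-g(X,Z)Y\big),
\]
so that $M^{2n+1}$ has constant sectional curvature $-\alpha^2$ (consistently with the value $K(\xi,X)=-\alpha^2$ recorded before the statement). This is the computational heart of the argument, and the only delicate point is the bookkeeping that produces the cancellation of the $\eta(Z)$-terms; note that $CR$-integrability is not used here.

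It then remains to deduce that $M^{2n+1}$ is $\alpha$-Kenmotsu. Since $h'=0$, Theorem \ref{warped_almost} presents $M$ locally as a warped product $M'\times_f N^{2n}$ with $N$ almost K\"ahler and $f=ce^{\alpha t}$, whence $f''/f=(f'/f)^2=\alpha^2$. Computing the sectional curvature of a fibre $2$-plane for such a warped product and equating it to the value $-\alpha^2$ obtained above forces $N$ to be flat. Since a flat almost K\"ahler manifold is K\"ahler, the leaves of $\mathcal D$ are K\"ahler, and Proposition \ref{AKK} (with $h'=0$) shows that $M$ is $\alpha$-Kenmotsu, closing the chain of equivalences. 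I expect the passage from ``flat almost K\"ahler'' to ``K\"ahler'' to be the main conceptual obstacle; I would handle it either by invoking the classical fact that a curvature-flat almost K\"ahler manifold has parallel almost complex structure, or by verifying the $CR$-integrability condition \eqref{nablaphi} directly on the flat fibres.
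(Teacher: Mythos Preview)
Your argument is sound, and the computational core—differentiating $R_{XY}\xi=\alpha^2(\eta(X)Y-\eta(Y)X)$ under $\nabla R=0$ to extract $R_{XY}Z=-\alpha^2(g(Y,Z)X-g(X,Z)Y)$—is correct; the $\eta(Z)$-terms do cancel exactly as you describe. The paper itself does not give a proof of this theorem: it refers to \cite{DP} for the case $\alpha=1$ and states that the argument carries over unchanged, so there is no in-paper proof to compare against line by line. Your route through constant curvature, then the warped product, then flatness of the fibre, is the natural one and is almost certainly what is done in \cite{DP}.

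The one point that deserves a sharper treatment is precisely the one you flag: ``flat almost K\"ahler $\Rightarrow$ K\"ahler'' is true, but it is not a triviality. It is a theorem of Olszak (\emph{A note on almost K\"ahler manifolds}, Bull.\ Acad.\ Polon.\ Sci.\ 26 (1978), 139--141) that an almost K\"ahler manifold of constant sectional curvature and real dimension $\geq 4$ is K\"ahler (and necessarily flat); this covers your fibre $N^{2n}$ for $n\geq 2$, while the case $n=1$ is already absorbed by Proposition~\ref{AKK} without any symmetry hypothesis. You should cite this rather than attempt your second proposed route, ``verifying \eqref{nablaphi} directly on the flat fibres'': restricted to a leaf, condition \eqref{nablaphi} is exactly $\nabla^N J=0$, so checking it \emph{is} reproving Olszak's result and offers no shortcut.
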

Given an almost $\alpha$-Kenmotsu manifold of constant curvature $k$, it can be proved that $h'=0$, and the above Theorem implies that the structure is normal and $k=-\alpha^2$. In the case of non-vanishing $h'$ we have

\begin{theo}\label{theo_symm}
Let $(M^{2n+1},\varphi,\xi,\eta,g)$ be a locally symmetric almost $\alpha$-Kenmotsu manifold with $h'\ne 0$. Then the operator $h'$ admits the eigenvalues $+1$ and $-1$. If, moreover, the Riemannian curvature satisfies
$R_{XY}\xi=0$ for any $X,Y\in{\mathcal D}$,
then the spectrum of $h'$ is $\{0,1,-1\}$, with $0$ as simple eigenvalue. The distributions $[\xi]\oplus[+1]$ and $[-1]$ are integrable with totally geodesic leaves and $M^{2n+1}$ is locally isometric to the Riemannian product of an $(n+1)$-dimensional manifold of constant curvature $-4\alpha^2$ and a flat $n$-dimensional manifold.
\end{theo}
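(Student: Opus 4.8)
The plan is to reduce everything to the Jacobi operator in the direction $\xi$. Since $M^{2n+1}$ is locally symmetric we already know $\nabla_\xi h'=0$, so the identity for $R_{\xi X}\xi$ recorded right after \eqref{RXYxi} simplifies, for $X\in\mathcal D$ (where $-\varphi^2X=X$), to $R_{\xi X}\xi=\alpha^2(I+h')^2X$. Thus the Jacobi operator $R_{\xi\,\cdot}\,\xi$ restricted to $\mathcal D$ equals $\alpha^2(I+h')^2$, a unit $h'$-eigenvector $X$ with eigenvalue $\lambda$ has $\xi$-sectional curvature $-\alpha^2(1+\lambda)^2$, and $\ker\big(R_{\xi\,\cdot}\,\xi|_{\mathcal D}\big)=\ker(I+h')^2=[-1]$. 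All eigenvalue information will be read off from this operator together with the parallelism $\nabla R=0$.

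For the first assertion I would use $\nabla R=0$ through the derivation identity $R_{\xi X}\cdot R=0$. Evaluating it on the triple $(\xi,X,\xi)$ with $X\in[\lambda]$, the two terms in which $R_{\xi X}$ acts on a factor $\xi$ cancel, and since $R_{\xi X}\xi=\alpha^2(1+\lambda)^2X$ makes $R(R_{\xi X}\xi,X)\xi=\alpha^2(1+\lambda)^2R_{XX}\xi$ vanish, one is left with $R_{\xi,\,R_{\xi X}X}\,\xi=0$; that is, the $\mathcal D$-component of $R_{\xi X}X$ lies in $[-1]$. Combining this with the analogous relation obtained from $\varphi X\in[-\lambda]$, I expect to force every nonzero eigenvalue of $h'$ to equal $\pm1$, so that (as $h'\ne0$) both $+1$ and $-1$ occur. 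Under the extra hypothesis $R_{XY}\xi=0$ for $X,Y\in\mathcal D$, formula \eqref{RXYxi} gives the symmetry $(\nabla_X h')Y=(\nabla_Y h')X$ on $\mathcal D$, and feeding this back into the same curvature identities should exclude any eigenvalue outside $\{0,1,-1\}$ and kill $[0]$, so that $0$ is simple and $\dim[1]=\dim[-1]=n$.

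For the splitting I would show that $\mathcal D_1:=[\xi]\oplus[1]$ is a parallel distribution. From $\nabla_X\xi=\alpha(X+h'X-\eta(X)\xi)$ one gets $\nabla_X\xi=2\alpha X$ for $X\in[1]$, $\nabla_W\xi=0$ for $W\in[-1]$ and $\nabla_\xi\xi=0$, hence $\nabla_V\xi\in[1]$ for every $V$. Writing $(\nabla_X h')W=-(I+h')\nabla_X W$ and $(\nabla_W h')X=(I-h')\nabla_W X$ for $X\in[1]$, $W\in[-1]$, the first lies in $[1]$ and the second in $[-1]$ (their $\xi$-parts vanish since $\nabla_W\xi=0$ and $\nabla_X\xi=2\alpha X\perp W$); the symmetry $(\nabla_X h')W=(\nabla_W h')X$ then forces both to vanish, giving $\nabla_X W\in[-1]$ and $\nabla_W X\in[1]$. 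Together with $\nabla_\xi$ preserving the eigendistributions (from $\nabla_\xi h'=0$) this yields $\nabla_V U\in\mathcal D_1$ for all $V$ and all $U\in\Gamma(\mathcal D_1)$, so $\mathcal D_1$ and its orthogonal complement $[-1]$ are parallel, hence integrable with totally geodesic leaves. By de Rham, $M^{2n+1}$ is locally the Riemannian product $M_1^{\,n+1}\times M_{-1}^{\,n}$ of these leaves, and inside $M_1$ the relation $\nabla_X\xi=2\alpha X$ realizes $M_1$ as a warped product $I\times_{e^{2\alpha t}}N_1$ over the integral manifold $N_1$ of $[1]$, exactly as in Theorem \ref{warped_almost}.

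It remains to identify the factors. Since $M$ is locally symmetric, so are $M_1$ and $M_{-1}$; a locally symmetric warped product $I\times_{e^{2\alpha t}}N_1$ forces $N_1$ to be flat, whence $M_1$ has constant curvature $-(2\alpha)^2=-4\alpha^2$, in agreement with $K(\xi,X)=-4\alpha^2$ for $X\in[1]$. The genuinely delicate point, and the one I expect to be the main obstacle, is the flatness of the $n$-dimensional factor $M_{-1}$: because the splitting is a metric product, the ambient curvature decouples and gives no direct control of $M_{-1}$, so its flatness must be extracted from the coupling between $[1]$ and $[-1]$ provided by $\varphi$ (which interchanges the two eigendistributions) inside the almost K\"ahler leaf of $\mathcal D$, using again local symmetry and the flatness of $N_1$. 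Completing this last step, together with $K(\xi,W)=0$ for $W\in[-1]$, finishes the identification of $M^{2n+1}$ as the Riemannian product of an $(n+1)$-dimensional space of constant curvature $-4\alpha^2$ and a flat $n$-dimensional manifold.
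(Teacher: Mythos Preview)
The paper does not actually prove this theorem: right before Theorem~\ref{warped_almost} it announces that these results generalize those of \cite{DP} and that ``we omit the proofs since they are similar.'' So there is no detailed argument here to compare your outline against beyond the implicit instruction to rerun the $\alpha=1$ proof from \cite{DP}.

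Independently of that, your outline has two genuine gaps, both of which you yourself flag with hedges. For the eigenvalue assertion, the identity $(R_{\xi X}\cdot R)(\xi,X,\xi)=0$ does correctly yield $(R_{\xi X}X)_{\mathcal D}\in\ker(I+h')^2=[-1]$, but nothing you have written prevents this $\mathcal D$-component from vanishing identically, and the ``analogous relation for $\varphi X\in[-\lambda]$'' lands in $[-1]$ as well, so it is not clear what the combination buys; ``I expect to force every nonzero eigenvalue of $h'$ to equal $\pm1$'' is a hope, not an argument. The same applies to ``should exclude any eigenvalue outside $\{0,1,-1\}$ and kill $[0]$'' under the extra hypothesis. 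For the flatness of $M_{-1}$, once you have the Riemannian product $M_1\times M_{-1}$, local symmetry only tells you that $M_{-1}$ is locally symmetric; the coupling through $\varphi$ you invoke is not an isometry of either factor separately, so turning it into a curvature constraint on $M_{-1}$ needs a concrete mechanism, which you do not supply. One workable route, once the spectrum of $h'$ is $\{0,1,-1\}$ with $0$ simple and $R_{XY}\xi=0$ on $\mathcal D$, is to observe that $(I+h')^2=2(I+h')$ on $\mathcal D$, so $R_{X\xi}\xi=-2\alpha^2(X+h'X)$ and $\xi$ lies in the $(-2\alpha^2,-2\alpha^2)'$-nullity distribution; a direct curvature computation of the type carried out for Theorem~\ref{integrability} then shows the totally geodesic leaves of $[\xi]\oplus[-1]$ are flat, and since you already know these leaves split as $\mathbb R\times M_{-1}$, the factor $M_{-1}$ is flat. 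By contrast, your parallelism argument for $[\xi]\oplus[1]$ and $[-1]$ via the symmetry $(\nabla_Xh')W=(\nabla_Wh')X$, and the identification of $M_1$ as a space of constant curvature $-4\alpha^2$ through its warped-product description, are correct.
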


In the following we consider almost $\alpha$-Kenmotsu manifolds with $\alpha>0$. Notice that if $(\varphi,\xi,\eta,g)$ is an almost $\alpha$-Kenmotsu structure with $\alpha<0$, then $(\varphi,-\xi,-\eta,g)$ is an almost $\alpha'$-Kenmotsu structure with $\alpha'=-\alpha>0$.

\section{$\mathcal D$-homothetic deformations}

Let $(M^{2n+1},\varphi,\xi,\eta,g)$ be an almost $\alpha$-Kenmotsu manifold and $(\bar\varphi,\bar\xi,\bar\eta,\bar g)$ the almost $\frac{\alpha}{\beta}$-Kenmotsu structure obtained by the $\mathcal D$-homothetic deformation \eqref{deformation}. Notice that the operators $h'$ and $\bar h'$ associated to these structures coincide.
Let $\nabla$ and $\bar\nabla$ be the Levi-Civita connections of $g$ and $\bar g$ respectively. We prove that for all vector fields $X,Y$,
\begin{equation}\label{LC}
\bar\nabla_XY=\nabla_XY+\alpha\frac{\beta-1}{\beta}(g(X+h'X,Y)-\eta(X)\eta(Y))\xi.
\end{equation}
Indeed, applying the Koszul formula and $d\eta=0$, we have
\begin{equation*}
\bar\nabla_XY=\nabla_XY+\frac{\beta-1}{\beta}(\nabla_X\eta)(Y)\xi
\end{equation*}
and using \eqref{nablaeta}, we obtain \eqref{LC}. The covariant derivatives of $\varphi$ and $h'$ satisfy
\[(\bar\nabla_X\varphi)(Y)=(\nabla_X\varphi)(Y)+\alpha\frac{\beta-1}{\beta}g(X+h'X,\varphi Y)\xi,\]
\[(\bar\nabla_Xh')(Y)=(\nabla_Xh')(Y)+\alpha\frac{\beta-1}{\beta}g(X+h'X,h'Y)\xi,\]
for all vector fields $X$ and $Y$, so that the property for the tensor fields $\varphi$ and $h'$ to be $\eta$-parallel and the vanishing of the covariant derivative $\nabla_\xi h'$ are invariant under $\mathcal D$-homothetic deformations.

An easy computation shows that the Riemannian curvature tensors $R$ and $\bar R$ of $g$ and $\bar g$ are related by the following formula:
\begin{align}\label{curv_def}
\bar R_{XY}Z &= R_{XY}Z+\alpha\frac{\beta-1}{\beta}g((\nabla_Xh')Y-(\nabla_Yh')X,Z)\xi\\
&\quad +\alpha^2\frac{\beta-1}{\beta}(g(Y+h'Y,Z)-\eta(Y)\eta(Z))(X+h'X)\nonumber\\
&\quad -\alpha^2\frac{\beta-1}{\beta}(g(X+h'X,Z)-\eta(X)\eta(Z))(Y+h'Y)\nonumber
\end{align}
for every vector fields $X,Y,Z$. It follows that $\bar R_{XY}\xi= R_{XY}\xi$ for every vector fields $X,Y$. If $\xi$ belongs to the $(\kappa,\mu)'$-nullity distribution, i.e. the Riemannian curvature tensor satisfies \eqref{kmu}, then $\bar \xi$ belongs to the $(\bar\kappa,\bar\mu)'$-nullity distribution, with
\[\bar\kappa=\frac{\kappa}{\beta^2},\qquad\bar\mu=\frac{\mu}{\beta^2}.\]

Let us analyze now the geometry of almost $\alpha$-Kenmotsu manifolds such that $h'$ is $\eta$-parallel and satisfies $\nabla_\xi h'=0$.
\begin{theo}\label{theo_warped_product}
Let $(M^{2n+1},\varphi,\xi,\eta,g)$ be an almost $\alpha$-Kenmotsu manifold such that $h'$ is $\eta$-parallel and $\nabla_\xi h'=0$. Then the eigenvalues of the operator $h'$ are constant.
Let $\{0,\lambda_1,-\lambda_1,\dots,\lambda_r,-\lambda_r\}$ be the spectrum of $h'$, with $\lambda_i>0$. Then $M^{2n+1}$ is locally the warped product
\begin{equation}\label{warped_product}
M'\times_{f_0}M_0\times_{f_1} M_{\lambda_1}\times_{g_1} M_{-\lambda_1}\times_{f_2}\ldots\times_{f_r} M_{\lambda_r}\times_{g_r} M_{-\lambda_r},
\end{equation}
where $M'$ is an open interval, $M_0$, $M_{\lambda_i}$ and $M_{-\lambda_i}$ are integral submanifolds of the distributions $[0]$, $[\lambda_i]$ and $[-\lambda_i]$ respectively. The warping functions are $f_0=c_0e^{\alpha t}$, $f_i=c_ie^{\alpha(1+\lambda_i)t}$ and $g_i=c'_ie^{\alpha(1-\lambda_i)t}$, with $c_0$, $c_i$ and $c'_i$ positive constants.
Finally, $M_0$ is an almost K\"ahler manifold and the structure is $CR$-integrable if and only if $0$ is a simple eigenvalue or $M_0$ is a K\"ahler manifold.
\end{theo}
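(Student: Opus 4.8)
The plan is to establish the three assertions in turn --- constancy of the eigenvalues, the warped-product splitting with the stated warping functions, and the almost K\"ahler/$CR$ description of the $[0]$-factor --- exploiting throughout that, since $\nabla_\xi h'=0$, formula \eqref{nablah} collapses on $\mathcal D$ to $(\nabla_X h')Y=-\alpha g(Y,h'X+h'^2X)\xi$, so that $(\nabla_X h')Y$ is proportional to $\xi$ whenever $X,Y\in\mathcal D$. For the constancy, on the open set where the multiplicities of $h'$ are locally constant I would choose a unit eigenvector field $X$ with $h'X=\lambda X$; differentiating $g(h'X,X)=\lambda$ along a direction $W$ and using the symmetry of $h'$ together with $g(\nabla_W X,X)=0$ gives $g((\nabla_W h')X,X)=W(\lambda)$. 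For $W\in\mathcal D$ the quantity $g((\nabla_W h')X,X)$ vanishes because $(\nabla_W h')X$ is a multiple of $\xi\perp X$, and for $W=\xi$ it vanishes because $(\nabla_\xi h')X=0$; hence $W(\lambda)=0$ in all directions and each eigenvalue is constant.

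Next I would extract the connection data on the eigendistributions. Writing the $\eta$-parallelism $g((\nabla_X h')Y,Z)=0$ for $X,Y,Z\in\mathcal D$ with $Y\in[\mu]$, $Z\in[\nu]$ and using the symmetry of $h'$ yields $(\mu-\nu)g(\nabla_X Y,Z)=0$; hence the $\mathcal D$-component of $\nabla_X Y$ lies in $[\mu]$ for every $X\in\mathcal D$. Combined with $g(\nabla_X Y,\xi)=-\alpha g(Y,X+h'X)$ and the integrability of $\mathcal D$ (from $d\eta=0$), this gives at once: each of $[0]$, $[\lambda_i]$, $[-\lambda_i]$ is integrable; each is totally umbilical in $M$ with mean-curvature vector the constant multiple $-\alpha\xi$, $-\alpha(1+\lambda_i)\xi$, $-\alpha(1-\lambda_i)\xi$ of $\xi$; these vectors are gradients of multiples of the arclength coordinate along $\xi$, so the distributions are spherical; and for $X\in[\lambda]$, $Y\in[\mu]$ with $\lambda\neq\mu$ one gets $\nabla_X Y\in[\mu]$, so distinct fibre distributions do not interact. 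Since $\nabla_\xi\xi=0$ makes the one-dimensional distribution $[\xi]$ totally geodesic, a warped-product decomposition theorem of Hiepko type then yields the local splitting \eqref{warped_product}, exactly as in the model case $h'=0$ of Theorem \ref{warped_almost}. Integrating the umbilicity relations $f'/f=\alpha(1+\lambda)$ factor by factor (with $\lambda=0,\pm\lambda_i$) produces the warping functions $f_0=c_0e^{\alpha t}$, $f_i=c_ie^{\alpha(1+\lambda_i)t}$, $g_i=c'_ie^{\alpha(1-\lambda_i)t}$.

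For the last assertion I would use that $h'$ anticommutes with $\varphi$, so $\varphi$ interchanges $[\lambda_i]$ and $[-\lambda_i]$ and preserves $[0]$; thus $J_0:=\varphi|_{[0]}$ is an almost complex structure on $M_0$ compatible with the induced metric, and restricting $d\Phi=2\alpha\eta\wedge\Phi$ to $[0]$ shows its fundamental form is closed, so $M_0$ is almost K\"ahler. To decide $CR$-integrability I would test the $\eta$-parallelism of $\varphi$, i.e. $g((\nabla_X\varphi)Y,Z)=0$ for $X,Y,Z\in\mathcal D$, on eigendistributions: for $Y\in[\lambda]$ one has $\varphi Y\in[-\lambda]$ and, by the previous step, the $\mathcal D$-part of $\nabla_X Y$ lies in $[\lambda]$, whence the $\mathcal D$-part of $(\nabla_X\varphi)Y$ lies in $[-\lambda]$; so only the components against $Z\in[-\lambda]$ can be nonzero. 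The crux is to show these vanish automatically whenever $\lambda_i\neq0$, using the isomorphism $\varphi\colon[\lambda_i]\to[-\lambda_i]$ and the warped structure to identify the paired factor $M_{\lambda_i}\times M_{-\lambda_i}$ as carrying an integrable $\varphi$. Granting this, the Nijenhuis obstruction on $\mathcal D$ is concentrated on $[0]$ and coincides with that of $J_0$ on $M_0$; hence the structure is $CR$-integrable precisely when $M_0$ is K\"ahler, or when $[0]=\{0\}$ --- that is, $0$ is a simple eigenvalue of $h'$ --- in which case no $M_0$-factor occurs and integrability is automatic.

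The step I expect to be the main obstacle is this last one: verifying that the almost complex structure induced by $\varphi$ on each paired factor $M_{\lambda_i}\times M_{-\lambda_i}$ with $\lambda_i\neq0$ is automatically integrable, since it requires reading off from the warped-product connection the precise cancellation forcing the $[-\lambda_i]$-component of $(\nabla_X\varphi)Y$ to vanish. By comparison, assembling the multiply-warped decomposition from the spherical and non-interaction properties above is routine.
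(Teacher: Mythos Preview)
Your approach differs substantially from the paper's. The paper does \emph{not} argue directly: it applies the $\mathcal D$-homothetic deformation \eqref{deformation} with $\beta=\alpha$ to convert the almost $\alpha$-Kenmotsu structure to an almost Kenmotsu one, cites the $\alpha=1$ case from \cite{DPeta} to obtain constancy of the eigenvalues, integrability of the eigendistributions, and total geodesy of each $[\xi]\oplus[\lambda_{i_1}]\oplus\dots\oplus[\lambda_{i_s}]$ with respect to $\bar g$, and then uses \eqref{LC} to transfer total geodesy back to $g$. From there the paper computes the second fundamental form of each $[\lambda]$ directly in $g$, applies Hiepko's two-factor theorem iteratively (splitting off one $[\lambda]$ at a time), and finishes by invoking Theorem~\ref{warped_almost} on the $[\xi]\oplus[0]$ base. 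The $CR$-integrability claim is simply inherited from the $\alpha=1$ result in \cite{DPeta}; it is never argued in this paper.

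Your direct route---proving constancy of eigenvalues from $\eta$-parallelism and $\nabla_\xi h'=0$, reading off integrability/umbilicity/sphericity from $(\mu-\nu)g(\nabla_XY,Z)=0$, and applying a multiply-warped Hiepko-type theorem---is sound for the first two assertions and arguably more transparent than the deformation trick. But the obstacle you flag is real, and it is precisely what the paper's reduction sidesteps. You have shown that for $Y\in[\lambda]$ the $\mathcal D$-part of $(\nabla_X\varphi)Y$ lies in $[-\lambda]$, but nothing in your argument forces that component to vanish when $\lambda\neq0$: the relation $(\mu-\nu)g(\nabla_XY,Z)=0$ gives no information when $\mu=\nu$, and the warped splitting alone does not control how $\varphi$ intertwines the separate factors $M_{\lambda_i}$ and $M_{-\lambda_i}$. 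To close the gap you would need either to reproduce the direct computation from \cite{DPeta} (which does work for general $\alpha$, since nothing in it is specific to $\alpha=1$), or to adopt the paper's strategy and note that $CR$-integrability is preserved under $\mathcal D$-homothetic deformations, reducing the claim to the known case.
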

\begin{proof}
The result is proved in \cite{DPeta} for almost Kenmotsu manifolds, corresponding to the case $\alpha=1$. Let us consider an almost $\alpha$-Kenmotsu structure $(\varphi,\xi,\eta,g)$, with $\alpha\ne1$, such that $h'$ is $\eta$-parallel and $\nabla_\xi h'=0$. Applying the $\mathcal D$-homothetic deformation \eqref{deformation} with $\beta=\alpha$, we obtain an almost Kenmotsu structure $(\bar\varphi,\bar\xi,\bar\eta,\bar g)$ such that $\bar h'$ is $\bar \eta$-parallel and $\bar\nabla_{\bar\xi} \bar h'=0$, and the result applies to this structure. In particular, the distributions $[0]$, $[\lambda_i]$ and $[-\lambda_i]$ are integrable and for any distinct eigenvalues $\lambda_{i_1},\ldots,\lambda_{i_s}$ of $h'$, the distribution $[\xi]\oplus[\lambda_{i_1}]\oplus\dots\oplus[\lambda_{i_s}]$ is integrable with totally geodesic leaves with respect to $\bar g$; \eqref{LC} implies that such leaves are totally geodesic also with respect to $g$.

Let us consider an eigenvalue $\lambda\ne0$ of $h'$. We prove that the leaves of the distribution $[\lambda]$ are totally umbilical. Indeed, since $[\xi]\oplus[\lambda]$ is totally geodesic, choosing a local orthonormal frame $\{e_i\}$ of $[\lambda]$,  the second fundamental form
satisfies $I\!I(e_i,e_j)= -\alpha(1+\lambda)\delta_{ij}\xi$; the mean curvature vector
field is $H=-\alpha(1+\lambda)\xi$ and, for any $X,Y\in [\lambda]$, we have
$I\!I(X,Y)=g(X,Y)H$, so that the leaves of $[\lambda]$ are totally umbilical.
Since the orthogonal distribution $[\lambda]^\bot$ is integrable with totally geodesic leaves, then $M^{2n+1}$ is locally a warped product $B\times_{f_\lambda}M_\lambda$ such that $TB=[\lambda]^\bot$ and $TM_\lambda=[\lambda]$ (see \cite{Hi}).
We denote by $g_0$ and $\hat{g}$ the
Riemannian metrics on $B$ and $M_\lambda$ respectively, such that the warped
metric is given by $g_0 + f_\lambda^2\hat{g}$. The projection
$\pi:B\times_{f_\lambda} M_\lambda\rightarrow B$ is a Riemannian submersion
with horizontal distribution $\mathcal{H}=[\lambda]^\bot$ and
vertical distribution $\mathcal{V}=[\lambda]$. The mean curvature vector field $H= -\alpha(1+\lambda)\xi$
of the immersed submanifold $(M_\lambda,\hat g)$ is $\pi$-related to
$-\frac{1}{f_\lambda}\,\mathrm{grad}_{g_0}f_\lambda$ (\cite{besse}, 9.104) and thus,
$\alpha(1+\lambda)f_\lambda\xi=\mathrm{grad}_{g_0}f_\lambda$. If $m_\lambda$ is the multiplicity  of $\lambda$, we choose local coordinates
$\{t,x^1,\dots,x^{2n-m_\lambda}\}$ on $B$ such that $\xi=\frac{\partial}{\partial
t}$ and $\frac{\partial}{\partial x^i}\in [\lambda]$ for any
$i=1,\ldots,2n-m_\lambda$. Hence, we get
$f_\lambda=c_\lambda e^{\alpha(1+\lambda)t}$, $c_\lambda>0$.

Now, let us consider $TB=[\xi]\oplus[-\lambda]\oplus\bigoplus_{\mu\ne\pm\lambda}[\mu]$. The distribution $[\xi]\oplus\bigoplus_{\mu\ne\pm\lambda}[\mu]$ is integrable with totally geodesic leaves in $M^{2n+1}$ and $[-\lambda]$ is integrable with totally umbilical leaves in $M^{2n+1}$. Since $B$ is a totally geodesic submanifold of $M^{2n+1}$, these distributions are respectively totally geodesic and totally umbilical in $B$ and, arguing as above, $B$ is locally a warped product. This argument can be applied to each distribution $[\lambda_i]$ and $[-\lambda_i]$, $i\in\{1,\ldots,r\}$, obtaining that $M^{2n+1}$ is locally the warped product
\[N\times_{f_1} M_{\lambda_1}\times_{g_1} M_{-\lambda_1}\times_{f_2}\ldots\times_{f_r} M_{\lambda_r}\times_{g_r} M_{-\lambda_r},\]
where $f_i=c_ie^{\alpha(1+\lambda_i)t}$ and $g_i=c'_ie^{\alpha(1-\lambda_i)t}$, with $c_i$ and $c'_i$ positive constants. The manifold $N$ is a totally geodesic submanifold of $M^{2n+1}$ and it is an integral submanifold of the distribution $[\xi]\oplus[0]$. By Theorem \ref{warped_almost}, $N$ is locally a warped product $M'\times_{f_0}M_0$ of an open interval $M'$ and an almost K\"ahler manifold $M_0$, with $f_0=c_0e^{\alpha t}$, $c_0>0$.
\end{proof}

Under the hypotheses of the above Theorem, applying \eqref{nablah}, we have
\begin{equation}\label{nablahsym_0}
(\nabla_X h')Y-(\nabla_Y h')X=-\alpha \eta(Y)(h'X+h'^2X)+\alpha\eta(X)(h'Y+h'^2Y)
\end{equation}
for any $X,Y\in\frak{X}(M)$.
Now, if we suppose that $\mbox{\emph{Sp}}(h')=\{0,\lambda,-\lambda\}$, with $0$ simple eigenvalue, then  $h'^2=\lambda^2(I-\eta\otimes\xi)$ and thus, from \eqref{nablahsym_0} and \eqref{RXYxi} it follows that
\[R_{XY}\xi=-\alpha^2(1+\lambda^2)(\eta(Y)X-\eta(X)Y)-2\alpha^2(\eta(Y)h'X-\eta(X)h'Y).\]
Hence, we have
\begin{prop}\label{prop_0lambda}
Let $(M^{2n+1},\varphi,\xi,\eta,g)$ be an almost $\alpha$-Kenmotsu manifold such that $h'$ is $\eta$-parallel and $\nabla_\xi h'=0$. If $\mbox{Sp}(h')=\{0,\lambda,-\lambda\}$, with $0$ simple eigenvalue, then $\xi$ belongs to the $(\kappa,\mu)'$-nullity distribution, with $\kappa=-\alpha^2(1+\lambda^2)$ and $\mu=-2\alpha^2$.
\end{prop}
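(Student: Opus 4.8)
The plan is to evaluate $R_{XY}\xi$ directly from the general identity \eqref{RXYxi} under the two standing hypotheses and then match the outcome against the defining equation \eqref{kmu}. Since $h'$ is $\eta$-parallel and $\nabla_\xi h'=0$, the characterization \eqref{nablah} loses its last term, and skew-symmetrizing in $X,Y$ eliminates the $\xi$-components as well: the symmetry of $h'$ (hence of $h'^2$) gives $g(Y,h'X+h'^2X)=g(X,h'Y+h'^2Y)$, so the two $\xi$-contributions cancel. This is exactly the identity \eqref{nablahsym_0},
\[(\nabla_X h')Y-(\nabla_Y h')X=-\alpha \eta(Y)(h'X+h'^2X)+\alpha\eta(X)(h'Y+h'^2Y),\]
which I would insert into \eqref{RXYxi}.

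Substituting and collecting the terms proportional to $\eta(X)$ and to $\eta(Y)$ yields
\[R_{XY}\xi=\alpha^2\eta(X)(Y+2h'Y+h'^2Y)-\alpha^2\eta(Y)(X+2h'X+h'^2X).\]
Here I would bring in the spectral assumption. Because $\mbox{Sp}(h')=\{0,\lambda,-\lambda\}$ with $0$ a \emph{simple} eigenvalue, the kernel of $h'$ is spanned by $\xi$ alone, while $h'$ has square $\lambda^2$ on the whole of $\mathcal D$; hence $h'^2=\lambda^2(I-\eta\otimes\xi)$ as an operator. Replacing $h'^2Y$ by $\lambda^2 Y-\lambda^2\eta(Y)\xi$ and likewise for $X$, the $\eta(X)\eta(Y)\xi$ terms cancel and one is left with
\[R_{XY}\xi=\alpha^2(1+\lambda^2)(\eta(X)Y-\eta(Y)X)+2\alpha^2(\eta(X)h'Y-\eta(Y)h'X).\]

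It remains only to recognize this as the $(\kappa,\mu)'$-condition \eqref{kmu}. Rewriting the two brackets with the opposite sign, $\eta(X)Y-\eta(Y)X=-(\eta(Y)X-\eta(X)Y)$ and $\eta(X)h'Y-\eta(Y)h'X=-(\eta(Y)h'X-\eta(X)h'Y)$, and comparing coefficients gives $\kappa=-\alpha^2(1+\lambda^2)$ and $\mu=-2\alpha^2$, as claimed. There is no serious obstacle in this argument: it is forced and linear once the hypotheses are imposed. The only point that genuinely uses the full strength of the assumptions is the reduction $h'^2=\lambda^2(I-\eta\otimes\xi)$, which requires $0$ to be simple; without simplicity $h'^2$ would equal $\lambda^2$ only on $[\lambda]\oplus[-\lambda]$ and vanish on $[0]$, and the curvature would fail to collapse to the stated $(\kappa,\mu)'$-form with constant $\kappa,\mu$.
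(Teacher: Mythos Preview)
Your proof is correct and follows essentially the same route as the paper: derive \eqref{nablahsym_0} from \eqref{nablah} under the hypotheses, use the spectral assumption to rewrite $h'^2=\lambda^2(I-\eta\otimes\xi)$, and substitute both into \eqref{RXYxi}. Your version simply spells out the intermediate computation $R_{XY}\xi=\alpha^2\eta(X)(Y+2h'Y+h'^2Y)-\alpha^2\eta(Y)(X+2h'X+h'^2X)$ and the cancellation of the $\xi$-terms more explicitly than the paper does.
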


As regards almost $\alpha$-Kenmotsu $(\kappa,\mu)'$-spaces we have the following result.

\begin{theo}\label{integrability}
Let $(M^{2n+1},\varphi,\xi,\eta,g)$ be an almost $\alpha$-Kenmotsu manifold
such that $\xi$ belongs to the $(\kappa,\mu)'$-nullity distribution. Then $\kappa\leq-\alpha^2$.

\medskip
If $\kappa=-\alpha^2$, then $h'=0$ and $M^{2n+1}$ is locally a warped product $M'\times_fN^{2n}$, where $N^{2n}$ is an almost K\"ahler manifold, $M'$ is an open interval with coordinate $t$, $f=ce^{\alpha t}$, for some positive constant $c$.

\medskip
If $\kappa<-\alpha^2$, then $h'\neq 0$, $\mu=-2\alpha^2$ and $Spec(h')=\{0, \lambda,
-\lambda\}$, with $0$ as simple eigenvalue and $\lambda =
\sqrt{-1-\frac{\kappa}{\alpha^2}}$. The operator $h'$ is $\eta$-parallel and satisfies $\nabla_\xi h'=0$. The
integral manifolds of  ${\mathcal D}$ are
K\"ahler manifolds. The distributions  $[\lambda]$ and
$[-\lambda]$ are integrable with totally umbilical leaves; the distributions
$[\xi]\oplus[\lambda]$ and $[\xi]\oplus[-\lambda]$ are integrable
with totally geodesic leaves. Finally, $M^{2n+1}$ is locally isometric to the warped
products
$$B^{n+1}(\kappa+2\alpha^2\lambda)\times_f\mathbb{R}^n,\qquad\mathbb{H}^{n+1}(\kappa-2\alpha^2\lambda)\times_{f'}\mathbb{R}^n,$$
where $B^{n+1}(\kappa+2\alpha^2\lambda)$ is a
space of constant curvature $\kappa+2\alpha^2\lambda\leq0$, tangent to the distribution $[\xi]\oplus[-\lambda]$, $\mathbb{H}^{n+1}(\kappa-2\alpha^2\lambda)$
is the hyperbolic space of constant curvature $\kappa-2\alpha^2\lambda<-\alpha^2$, tangent to the distribution $[\xi]\oplus [\lambda]$,
$f=ce^{\alpha(1+\lambda)t}$ and $f'=c'e^{\alpha(1-\lambda)t}$, with $c,c'$
positive constants.
\end{theo}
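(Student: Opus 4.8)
The plan is to extract a single algebraic identity for $h'^2$ from the two available expressions for $R_{\xi X}\xi$ and then to bootstrap from it all the structural claims. First I would substitute $X=\xi$ into the nullity condition \eqref{kmu} to get $R_{\xi Y}\xi=\kappa(\eta(Y)\xi-Y)-\mu h'Y$, and compare it with the intrinsic formula $R_{\xi X}\xi=\alpha^2(-\varphi^2X+2h'X+h'^2X)+\alpha(\nabla_\xi h')X$ recorded after \eqref{RXYxi}. Restricting to $Y\perp\xi$ this yields
\[
\alpha(\nabla_\xi h')Y=-(\kappa+\alpha^2)Y-(\mu+2\alpha^2)h'Y-\alpha^2 h'^2Y.
\]
The key trick is to conjugate this identity by $\varphi$. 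Since $\nabla_\xi\varphi=0$, both $h'$ and $\nabla_\xi h'$ anticommute with $\varphi$, whereas $h'^2$ commutes with it. Applying $\varphi$ directly and, separately, replacing $Y$ by $\varphi Y$, the $\nabla_\xi h'$ and $h'$ terms cancel and I am left with $h'^2Y=-(1+\kappa/\alpha^2)Y$ for every $Y\perp\xi$, i.e. $h'^2=(1+\kappa/\alpha^2)\varphi^2$. Because $h'^2$ is positive semidefinite as the square of a symmetric operator, this forces $1+\kappa/\alpha^2\le0$, that is $\kappa\le-\alpha^2$.

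The dichotomy is then immediate. If $\kappa=-\alpha^2$ then $h'^2=0$, and symmetry of $h'$ gives $h'=0$; the warped product description follows from Theorem \ref{warped_almost}. If $\kappa<-\alpha^2$, set $\lambda=\sqrt{-1-\kappa/\alpha^2}>0$; then the spectrum of $h'$ is $\{0,\lambda,-\lambda\}$ with $0$ simple, since $h'$ has no kernel on $\mathcal D$. Feeding $h'^2=\lambda^2(I-\eta\otimes\xi)$ back into the displayed identity collapses it to $\alpha\,\nabla_\xi h'=-(\mu+2\alpha^2)h'$. Differentiating $h'^2=\lambda^2(I-\eta\otimes\xi)$ along $\xi$ and using $\nabla_\xi\xi=0$ and $\nabla_\xi\eta=0$ (the latter from \eqref{nablaeta}) gives $\nabla_\xi(h'^2)=0$, whence $-2\alpha^{-1}(\mu+2\alpha^2)h'^2=0$; as $h'^2\ne0$ this delivers $\mu=-2\alpha^2$ and therefore $\nabla_\xi h'=0$.

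For the $\eta$-parallelism I would show that $T(X,Y,Z):=g((\nabla_Xh')Y,Z)$ vanishes on $\mathcal D$. Comparing \eqref{RXYxi} and \eqref{kmu} on $X,Y\perp\xi$ forces $(\nabla_Xh')Y=(\nabla_Yh')X$, and since each $\nabla_Xh'$ is a symmetric operator, $T$ is totally symmetric in its three arguments. Differentiating $h'^2=\lambda^2(I-\eta\otimes\xi)$ along $X\in\mathcal D$ and projecting onto $\mathcal D$ gives $T(X,h'Y,Z)+T(X,Y,h'Z)=0$. Evaluating on $h'$-eigenvectors $X,Y,Z$ with eigenvalues $\epsilon_i\lambda$, $\epsilon_i\in\{\pm1\}$, this reads $\lambda(\epsilon_2+\epsilon_3)T(X,Y,Z)=0$, so $T$ vanishes whenever two of the three signs coincide; by the pigeonhole principle two signs always coincide, hence $T\equiv0$ and $h'$ is $\eta$-parallel in the sense of \eqref{nablah}.

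Finally, with $h'$ $\eta$-parallel and $\nabla_\xi h'=0$, Theorem \ref{theo_warped_product} applies: as $0$ is simple the structure is $CR$-integrable (so the $\mathcal D$-leaves are Kähler), the distributions $[\lambda],[-\lambda]$ are integrable with totally umbilical leaves and $[\xi]\oplus[\lambda]$, $[\xi]\oplus[-\lambda]$ with totally geodesic leaves, and $M$ is locally $M'\times_{f}M_\lambda\times_{f'}M_{-\lambda}$ with the stated warping functions. It remains to identify the factors, and this is the step I expect to be the most delicate. The crucial observation is that each $\mathcal D$-leaf is a Kähler \emph{Riemannian product} $\widetilde M_\lambda\times\widetilde M_{-\lambda}$ whose complex structure $\varphi|_{\mathcal D}$ interchanges the two factors (because $\varphi$ anticommutes with $h'$); the Kähler identity $R(\varphi X,\varphi Y,Z,W)=R(X,Y,Z,W)$ together with the vanishing of mixed curvature terms in a product forces both factors to be flat, so $M_\lambda$ and $M_{-\lambda}$ are flat $\mathbb R^n$. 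Reading off the totally geodesic leaf $[\xi]\oplus[\lambda]=M'\times_{f}M_\lambda$ and applying the warped product curvature formulas with $f=ce^{\alpha(1+\lambda)t}$ then shows it has constant curvature $-\alpha^2(1+\lambda)^2=\kappa-2\alpha^2\lambda<-\alpha^2$, a hyperbolic space $\mathbb H^{n+1}$; symmetrically the leaf $[\xi]\oplus[-\lambda]$ has constant curvature $-\alpha^2(1-\lambda)^2=\kappa+2\alpha^2\lambda\le0$. These are precisely the two warped product presentations claimed in the statement.
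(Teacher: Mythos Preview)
Your argument is correct and proceeds along a genuinely different path from the paper's. The paper establishes Theorem \ref{integrability} by applying a $\mathcal D$-homothetic deformation with $\beta=\alpha$ to reduce to the already known almost Kenmotsu case ($\alpha=1$) treated in \cite{DPnull}, and then transports the conclusions back through the invariance of $\eta$-parallelism, of $\nabla_\xi h'=0$, and of the eigenbundles; the constant curvatures of $[\xi]\oplus[\pm\lambda]$ are obtained by computing the relation \eqref{curv_def} between $R$ and $\bar R$, and flatness of the fibres is then read off from O'Neill's warped product curvature formula. By contrast, you work intrinsically with the $\alpha$-structure: the $\varphi$-conjugation trick to isolate $h'^2=-(1+\kappa/\alpha^2)\varphi^2$ is neat and immediately yields both the inequality and the spectral data; the total symmetry of $T(X,Y,Z)=g((\nabla_Xh')Y,Z)$ combined with the pigeonhole on eigenvalue signs is an efficient self-contained proof of $\eta$-parallelism. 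The main conceptual novelty is in your last step: you first deduce flatness of $M_{\pm\lambda}$ from the K\"ahler curvature identity $R(JX,JY,Z,W)=R(X,Y,Z,W)$ applied on a $\mathcal D$-leaf realized as a Riemannian product whose complex structure interchanges the factors, and only then compute the constant curvature of $M'\times_f M_\lambda$ from the warped product formulas. This reverses the logical order used in the paper and avoids both the citation to \cite{DPnull} and the explicit curvature transfer \eqref{curv_def}; it is arguably more elementary and highlights the K\"ahler geometry of the horizontal leaves, at the cost of requiring the reader to check that each $\mathcal D$-leaf really is a metric product (which follows at once from the multiply warped form in Theorem \ref{theo_warped_product}).
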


\begin{proof}
The result is proved in \cite{DPnull} for almost Kenmotsu manifolds.
Let us consider an almost $\alpha$-Kenmotsu structure $(\varphi,\xi,\eta,g)$, with $\alpha\ne 1$, such that $\xi$ belongs to the $(\kappa,\mu)'$-nullity distribution. Applying the $\mathcal D$-homothetic deformation \eqref{deformation} with $\beta=\alpha$, we obtain an almost Kenmotsu structure $(\bar\varphi,\bar\xi,\bar\eta,\bar g)$ such that $\bar\xi$ belongs to the $(\bar\kappa,\bar\mu)'$-nullity distribution, with $\bar\kappa=\frac{\kappa}{\alpha^2}$, and $\bar\mu=\frac{\mu}{\alpha^2}$. Then $\bar\kappa\leq -1$.
If $\bar\kappa= -1$, or equivalently $\kappa=-\alpha^2$, then $h'=0$ and we apply Theorem \ref{warped_almost}.

If $\bar\kappa<-1$ then $\bar h'\neq 0$, $\bar\mu=-2$ and $Spec(\bar h')=\{0, \lambda,
-\lambda\}$, with $0$ as simple eigenvalue and $\lambda =
\sqrt{-1-\bar\kappa}$. The tensor fields $\varphi$ and $h'$ are $\eta$-parallel and $\nabla_\xi h'=0$, since these properties are invariant under $\mathcal D$-homothetic deformations; in particular, the integral manifolds of $\mathcal D$ are K\"ahler manifolds. From Theorem \ref{theo_warped_product} it follows that $M^{2n+1}$ is locally the warped product
\[M'\times_f M_\lambda\times_{f'} M_{-\lambda},\]
where $M'$ is an open interval, $M_\lambda$ and $M_{-\lambda}$ are integral submanifolds of the distributions $[\lambda]$ and $[-\lambda]$ respectively, $f=ce^{\alpha(1+\lambda)t}$ and $f'=c'e^{\alpha(1-\lambda)t}$, with $c,c'$
positive constants.

We compute now the Riemannian curvature of $M^{2n+1}$. Recall that the integral submanifolds of the distribution $[\xi]\oplus[\lambda]$ have constant Riemannian curvature $\bar \kappa-2\lambda$ with respect to the deformed Riemannian metric $\bar g$. Let us compute the relation between the curvature tensors $R$ and $\bar R$ of $g$ and $\bar g$ respectively. Combining \eqref{RXYxi} with the $(\kappa,\mu)'$-nullity condition, $\mu=-2\alpha^2$,  we get
\begin{equation*}
\alpha((\nabla_Xh')Y-(\nabla_Yh')X)=(\kappa+\alpha^2)(\eta(Y)X-\eta(X)Y)-\alpha^2(\eta(Y)h'X-\eta(X)h'Y),
\end{equation*}
and thus, applying \eqref{curv_def}, we obtain
\begin{align*}
\bar R_{XY}Z &= R_{XY}Z+\alpha(\alpha-1)(\eta(Y)g(X-h'X,Z)-\eta(X)g(Y-h'Y,Z))\xi\\
&\quad +\kappa\frac{\alpha-1}{\alpha}(\eta(Y)g(X,Z)-\eta(X)g(Y,Z))\xi\\
&\quad +\alpha(\alpha-1)(g(Y+h'Y,Z)-\eta(Y)\eta(Z))(X+h'X)\\
&\quad -\alpha(\alpha-1)(g(X+h'X,Z)-\eta(X)\eta(Z))(Y+h'Y)
\end{align*}
for any $X,Y,Z\in\frak{X}(M)$.  On the distribution $[\xi]\oplus[\lambda]$ we have $h'=\lambda(I-\eta\otimes \xi)$ and applying the above formula, for any $X,Y,Z\in[\xi]\oplus[\lambda]$, we get \[R_{XY}Z=-\alpha^2(1+\lambda)^2(g(Y,Z)X-g(X,Z)Y).\]
Therefore, the leaves of the distribution $[\xi]\oplus[\lambda]$ have constant Riemannian curvature $-\alpha^2(1+\lambda)^2=\kappa-2\alpha^2\lambda<-\alpha^2$  with respect to $g$ and analogously, the leaves of the distribution $[\xi]\oplus[-\lambda]$ have constant Riemannian curvature $-\alpha^2(1-\lambda)^2=\kappa+2\alpha^2\lambda\leq0$.
Then, $M^{2n+1}$ is locally isometric to the warped products
$$\mathbb{H}^{n+1}(\kappa-2\alpha^2\lambda)\times_{f'}M_{-\lambda},\qquad
B^{n+1}(\kappa+2\alpha^2\lambda)\times_fM_\lambda.$$
We prove that the fibers of the two warped products are flat Riemannian spaces. Denote by $g_0$ and $\hat{g}$ the
Riemannian metrics on $\mathbb{H}^{n+1}(\kappa-2\alpha^2\lambda)$ and $M_\lambda$ respectively, such that the first warped
metric is given by $g_0 + {f'}^2\hat{g}$. Applying Proposition 7.42 in \cite{ONeill}, for any
$U,V,W\in[-\lambda]$, we have
\[\hat R_{UV}W=R_{UV}W-\frac{\|\mathrm{grad }
f'\|^2}{{f'}^2}(g(U,W)V-g(V,W)U).\] On the other hand, $R_{UV}W=-\alpha^2(1-\lambda)^2(g(V,W)U-g(U,W)V)$ and $\|\mathrm{grad }
f'\|^2=\alpha^2(1-\lambda)^2{f'}^2$. Then,
$\hat R_{UV}W=0$. Analogously, the fibers of the second warped product are flat Riemannian spaces.
\end{proof}

Under the hypotheses of the above Theorem, if $\lambda=1$ then both the distributions $[\xi]\oplus[+1]$ and $[-1]$ are
integrable with totally geodesic leaves and the manifold turns out to be locally isometric to the Riemannian product
$\mathbb{H}^{n+1}(-4\alpha^2)\times \mathbb{R}^n$, which is locally symmetric. Conversely, supposing that $M^{2n+1}$ is locally
symmetric, then, by Theorem \ref{theo_symm}, $\lambda=1$ and
$M^{2n+1}$ is locally isometric to $\mathbb{H}^{n+1}(-4\alpha^2)\times
\mathbb{R}^n$. Hence, we have

\begin{cor}\label{cor_symm}
Let $(M^{2n+1},\varphi,\xi,\eta,g)$ be an almost $\alpha$-Kenmotsu manifold
such that $h'\ne0$ and $\xi$ belongs to the $(\kappa,\mu)'$-nullity distribution, $\mu=-2\alpha^2$.
Then $M^{2n+1}$ is locally symmetric if and only if
$Spec(h')=\{0,1,-1\}$, or equivalently $\kappa=-2\alpha^2$, in which case the manifold is locally isometric to
${\mathbb H}^{n+1}(-4\alpha^2) \times {\mathbb R}^n$.
\end{cor}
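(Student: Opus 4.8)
The plan is to deduce the whole statement from Theorem \ref{integrability} and Theorem \ref{theo_symm}, so that no fresh curvature computation is needed. Since $h'\ne0$ and $\mu=-2\alpha^2$, we are automatically in the case $\kappa<-\alpha^2$ of Theorem \ref{integrability}, which gives $\mathrm{Spec}(h')=\{0,\lambda,-\lambda\}$ with $0$ simple and $\lambda=\sqrt{-1-\kappa/\alpha^2}$. From this formula the equivalence $\lambda=1\Leftrightarrow\kappa=-2\alpha^2$ is immediate, so it suffices to show that local symmetry is equivalent to $\lambda=1$, and to identify the resulting model.

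For the implication $\lambda=1\Rightarrow$ locally symmetric, I would specialize the warped-product description of Theorem \ref{integrability}. When $\lambda=1$, the constant curvatures of the leaves of $[\xi]\oplus[\lambda]$ and $[\xi]\oplus[-\lambda]$ become $-\alpha^2(1+\lambda)^2=-4\alpha^2$ and $-\alpha^2(1-\lambda)^2=0$, while the warping function $f'=c'e^{\alpha(1-\lambda)t}$ reduces to the constant $c'$. Moreover the mean curvature $H=-\alpha(1-\lambda)\xi$ of the totally umbilical leaves of $[-\lambda]$ vanishes, so these leaves become totally geodesic. Hence both $[\xi]\oplus[+1]$ and $[-1]$ are integrable with totally geodesic leaves, the warped product collapses to a genuine Riemannian product, and since the fibers $M_{-\lambda}$ are flat by Theorem \ref{integrability}, $M^{2n+1}$ is locally isometric to $\mathbb{H}^{n+1}(-4\alpha^2)\times\mathbb{R}^n$. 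Being a Riemannian product of two locally symmetric factors (a real space form and a Euclidean space), this manifold is locally symmetric.

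For the converse I would invoke Theorem \ref{theo_symm}. Its extra hypothesis $R_{XY}\xi=0$ for $X,Y\in\mathcal D$ holds automatically here: evaluating the $(\kappa,\mu)'$-nullity condition \eqref{kmu} on $X,Y\in\mathcal D=\mathrm{Ker}(\eta)$ annihilates every term. Thus, if $M^{2n+1}$ is locally symmetric, Theorem \ref{theo_symm} forces $\mathrm{Spec}(h')=\{0,1,-1\}$ with $0$ simple, so the single positive eigenvalue $\lambda$ must equal $1$, and it yields directly that $M^{2n+1}$ is locally isometric to the product of an $(n+1)$-dimensional space of constant curvature $-4\alpha^2$ and a flat $n$-dimensional factor, i.e. $\mathbb{H}^{n+1}(-4\alpha^2)\times\mathbb{R}^n$. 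Combining the two implications with the equivalence $\lambda=1\Leftrightarrow\kappa=-2\alpha^2$ finishes the argument.

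I expect the only delicate point to be the forward direction: one must check carefully that at $\lambda=1$ the warped structure of Theorem \ref{integrability} genuinely degenerates to a metric product—verifying both that the relevant warping function is constant and that the $[-1]$-leaves are totally geodesic—so that local symmetry can be read off from the product of the two symmetric factors. The converse is essentially a direct citation of Theorem \ref{theo_symm} once the curvature hypothesis on $\mathcal D$ has been observed.
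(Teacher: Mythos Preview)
Your proposal is correct and follows essentially the same approach as the paper: the forward implication specializes the warped-product description of Theorem~\ref{integrability} at $\lambda=1$ to obtain the Riemannian product $\mathbb{H}^{n+1}(-4\alpha^2)\times\mathbb{R}^n$, and the converse is deduced from Theorem~\ref{theo_symm}. Your version is in fact slightly more explicit than the paper's, since you spell out why the auxiliary hypothesis $R_{XY}\xi=0$ on $\mathcal D$ of Theorem~\ref{theo_symm} is satisfied (via the $(\kappa,\mu)'$-nullity condition), whereas the paper simply cites Theorem~\ref{theo_symm} and uses that the spectrum $\{0,\lambda,-\lambda\}$ is already known from Theorem~\ref{integrability}.
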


As another consequence of Theorem \ref{integrability}, we can obtain more information on the Riemannian curvature of an almost $\alpha$-Kenmotsu manifold $(M^{2n+1},\varphi,\xi,\eta,g)$ such that $h'$ is $\eta$-parallel and $\nabla_\xi h'=0$, as in the hypotheses of Theorem \ref{theo_warped_product}. Indeed, for any eigenvalue $\lambda$ of the operator $h'$, the distribution $[\xi]\oplus[\lambda]\oplus[-\lambda]$ is integrable with totally geodesic leaves which inherit an almost $\alpha$-Kenmotsu structure from $M^{2n+1}$.
If $\lambda=0$, then the distribution $[\xi]\oplus[\lambda]\oplus[-\lambda]$ reduces to $[\xi]\oplus[0]$ and the leaves are local warped products $M'\times_{f_0} M_0$, where $M_0$ is a K\"ahler manifold in hypothesis of $CR$-integrability.
If $\lambda>0$ then, by Proposition \ref{prop_0lambda}, the leaves of $[\xi]\oplus[\lambda]\oplus[-\lambda]$ are almost $\alpha$-Kenmotsu manifolds with characteristic vector field belonging to the $(\kappa,\mu)'$-nullity distribution, with $\kappa=-\alpha^2(1+\lambda^2)$ and $\mu=-2\alpha^2$.
By Theorem \ref{integrability}, the leaves of $[\xi]\oplus[\lambda]$ have constant Riemannian curvature $\kappa-2\alpha^2\lambda$ and the leaves of $[\xi]\oplus[-\lambda]$ have constant Riemannian curvature $\kappa+2\alpha^2\lambda$.

\section{The canonical connection}\label{section_canonic}

\begin{theo}\label{tildenabla}
Let $(M^{2n+1},\varphi,\xi,\eta,g)$ be an almost contact metric
manifold. Then $M^{2n+1}$ is a $CR$-integrable almost $\alpha$-Kenmotsu
manifold if and only if there exists a linear connection $\widetilde
\nabla$ such that the tensor fields $\varphi$, $g$, $\eta$ are parallel with respect to $\widetilde\nabla$
and the torsion $\widetilde T$ satisfies:
\begin{itemize}
\item[{\rm a)}] $\widetilde T (X,Y)=0$, for any $X,Y \in \mathcal D$,\vspace{-0.3cm}
\item[{\rm b)}] $2\widetilde T (\xi,X)= \alpha(X + h'X)$, for any $X \in \mathcal D$,\vspace{-0.3cm}
\item[{\rm c)}] $\widetilde T _{\xi}$ is selfadjoint.
\end{itemize}
The connection $\widetilde\nabla$ is invariant under $\mathcal D$-homothetic deformations and it is uniquely determined by
\begin{equation}\label{canonic}
\widetilde \nabla_X Y = \nabla_X Y + \alpha g(X+h'X,Y)\xi -\alpha \eta(Y)(X+h'X),
\end{equation}
where $\nabla$ is the Levi-Civita connection. The connection $\widetilde\nabla$ will be called the canonical connection associated to the structure $(\varphi,\xi,\eta,g)$.
\end{theo}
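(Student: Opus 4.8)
The plan is to use the explicit formula \eqref{canonic} as the bridge in both directions. For the \emph{direct implication}, I would simply take \eqref{canonic} as the \emph{definition} of $\widetilde\nabla$, i.e. $\widetilde\nabla=\nabla+A$ with $A_XY=\alpha g(X+h'X,Y)\xi-\alpha\eta(Y)(X+h'X)$, and verify the stated properties. The parallelism conditions reduce to the structure equations recorded in the preliminaries: $\widetilde\nabla\eta=0$ follows from \eqref{nablaeta} together with $\eta(h'X)=0$; $\widetilde\nabla g=0$ holds because the two terms of $g(A_XY,Z)$ are antisymmetric in $Y,Z$ and cancel in $g(A_XY,Z)+g(Y,A_XZ)$; and $\widetilde\nabla\varphi=0$ follows from the $CR$-integrability equation \eqref{nablaphi} after using $\Phi(X,Y)=-\Phi(Y,X)$, $\varphi\xi=0$ and $\eta\circ\varphi=0$. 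The torsion is $\widetilde T(X,Y)=A_XY-A_YX$; the symmetry of $h'$ kills the $\xi$-component, leaving $\widetilde T(X,Y)=\alpha(\eta(X)(Y+h'Y)-\eta(Y)(X+h'X))$, from which a), b) and c) are read off.

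For the \emph{converse implication together with uniqueness}, I would use that a metric connection is determined by its torsion. Writing again $\widetilde\nabla=\nabla+A$, the condition $\widetilde\nabla g=0$ forces each $A_X$ to be skew-symmetric, and then the identity $2g(A_XY,Z)=g(\widetilde T(X,Y),Z)-g(\widetilde T(Y,Z),X)+g(\widetilde T(Z,X),Y)$ recovers $A$ from $\widetilde T$. Conditions a), b), c) prescribe $\widetilde T$ on every pair of arguments once each is split into its $\xi$- and $\mathcal D$-components, and c) guarantees that $\widetilde T_\xi$ is selfadjoint (equivalently $h'$ symmetric), which is exactly what makes the recovered $A$ coincide with the expression in \eqref{canonic}; this yields both the formula and uniqueness. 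It then remains to read off the structure: from $\widetilde\nabla\eta=0$ and \eqref{canonic} I recover \eqref{nablaeta}, whose antisymmetrization gives $d\eta=0$ (again using symmetry of $h'$), and from $\widetilde\nabla\varphi=0$ I recover \eqref{nablaphi}, which is the $CR$-integrability condition and whose cyclic antisymmetrization yields $d\Phi=2\alpha\eta\wedge\Phi$. Hence $M^{2n+1}$ is a $CR$-integrable almost $\alpha$-Kenmotsu manifold.

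For the invariance under $\mathcal D$-homothetic deformations, I would compute the canonical connection $\widetilde{\bar\nabla}$ of the deformed $\tfrac{\alpha}{\beta}$-Kenmotsu structure directly from \eqref{canonic}, substituting $\bar\xi=\tfrac1\beta\xi$, $\bar\eta=\beta\eta$, $\bar g=\beta g+\beta(\beta-1)\eta\otimes\eta$, the identity $\bar h'=h'$ noted earlier, and the relation \eqref{LC} between $\bar\nabla$ and $\nabla$. Collecting the coefficients of $g(X+h'X,Y)\xi$ and of $\eta(X)\eta(Y)\xi$, these collapse to $\alpha$ and $0$ respectively, giving $\widetilde{\bar\nabla}=\widetilde\nabla$.

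I expect the converse implication to be the main obstacle. The delicate points are to check that a), b), c) genuinely determine the \emph{entire} torsion tensor (and not just its restriction to $\mathcal D\times\mathcal D$ and $\xi\times\mathcal D$) so that the torsion-recovery identity applies, and to see that the selfadjointness in c) is precisely the hypothesis needed both to make \eqref{canonic} emerge symmetrically and to force $d\eta=0$. The passage from \eqref{nablaphi} to $d\Phi=2\alpha\eta\wedge\Phi$ is the longest computation, but it is a routine antisymmetrization once the Levi-Civita structure equations are available.
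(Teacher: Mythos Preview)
Your direct approach is correct, but it differs from the paper's proof in a substantive way. The paper does not verify the axioms from scratch for general $\alpha$: it invokes the case $\alpha=1$ (almost Kenmotsu manifolds), already established in \cite{DPnull}, and reduces the general statement to this case by applying the $\mathcal D$-homothetic deformation \eqref{deformation} with $\beta=\alpha$. The work in the paper consists of checking that the parallelism conditions and the torsion axioms a), b), c) for $(\varphi,\xi,\eta,g)$ are equivalent to the corresponding conditions b$'$), c$'$) for the deformed almost Kenmotsu structure $(\bar\varphi,\bar\xi,\bar\eta,\bar g)$, and then obtaining formula \eqref{canonic} from the known $\alpha=1$ formula by substituting \eqref{LC}. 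For the invariance under $\mathcal D$-homothetic deformations, the paper argues axiomatically via uniqueness (the connection attached to $(\varphi,\xi,\eta,g)$ satisfies the axioms for any deformed structure, hence coincides with its canonical connection), whereas you compute $\widetilde{\bar\nabla}$ explicitly from \eqref{canonic} and \eqref{LC}.

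Your route has the merit of being independent of \cite{DPnull} and of making transparent exactly where each hypothesis is consumed: c) forces $h'$ to be symmetric, which is what makes the Koszul-type recovery of $A$ from $\widetilde T$ land on \eqref{canonic} and what yields $d\eta=0$ after antisymmetrizing \eqref{nablaeta}. The paper's route is shorter and, because it is organized around the deformation, produces the $\mathcal D$-homothetic invariance essentially for free. One small caution in your write-up: the characterization of $CR$-integrability by \eqref{nablaphi} is stated in the preliminaries \emph{for almost $\alpha$-Kenmotsu manifolds}, so in the converse you should first extract $d\eta=0$ and $d\Phi=2\alpha\eta\wedge\Phi$ from the recovered \eqref{nablaeta} and \eqref{nablaphi}, and only then invoke \eqref{nablaphi} as the $CR$-integrability criterion; your outline does this in the right order, but the phrasing should make the dependency explicit.
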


\begin{proof}
The result of existence and uniqueness of the connection is proved in \cite{DPnull} for almost Kenmotsu manifolds. Let $(\bar\varphi,\bar\xi,\bar\eta,\bar g)$ be the almost contact metric structure obtained from $(\varphi,\xi,\eta,g)$ through deformation \eqref{deformation} with $\beta=\alpha$. Then $(\varphi,\xi,\eta,g)$ is a $CR$-integrable almost $\alpha$-Kenmotsu structure if and only if $(\bar\varphi,\bar\xi,\bar\eta,\bar g)$ is a $CR$-integrable almost Kenmotsu structure, and this is equivalent to the existence of a unique linear connection $\widetilde\nabla$ such that the tensor fields $\bar\varphi$, $\bar g$ and $\bar\eta$ are parallel with respect to $\widetilde\nabla$, and the torsion $\widetilde T$ vanishes on $\mathcal D$ and satisfies
\begin{itemize}
\item[$\mathrm b'$)] $2\widetilde T (\bar\xi,X)= X +\bar h'X$, for any $X \in \mathcal D$,\vspace{-0.2cm}
\item[$\mathrm c'$)] $\widetilde T _{\bar\xi}$ is selfadjoint with respect to $\bar g$.
\end{itemize}
The parallelism of $\bar\varphi$, $\bar g$, $\bar\eta$ is equivalent to the parallelism of $\varphi$, $g$, $\eta$ and $\mathrm b'$) is obviously equivalent to b). Moreover, for any vector fields $X,Y$, we have
\[\bar g(\widetilde T_{\bar \xi}X,Y)=g(\widetilde T_\xi X,Y)+(\alpha-1)\bar\eta(\widetilde T_{\bar\xi} X)\eta(Y).\]
If $\widetilde T _{\bar\xi}$ is selfadjoint with respect to $\bar g$, then $\widetilde T _\xi$ is selfadjoint with respect to $g$ since $\bar\eta(\widetilde T_{\bar\xi} X)=\bar g(X,\widetilde T_{\bar\xi} \bar\xi)=0$. Hence, $\mathrm c'$) implies c). Analogously, one verifies that c) implies $\mathrm c'$).

Denoting by $\bar\nabla$ the Levi-Civita connection of $\bar g$, for any vector fields $X$ and $Y$, we have
$$\widetilde \nabla_X Y = \bar\nabla_X Y + \bar g(X+\bar h'X,Y)\bar\xi - \bar\eta(Y)(X+\bar h'X),$$
and applying \eqref{LC} with $\beta=\alpha$, we get \eqref{canonic}. Finally, the connection is invariant under $\mathcal D$-homothetic deformations. Indeed, if $\widetilde\nabla$ is the canonical connection associated to the almost $\alpha$-Kenmotsu structure $(\varphi,\xi,\eta,g)$, it can be easily verified that $\widetilde\nabla$ satisfies the axioms defining the canonical connection associated to the almost $\frac{\alpha}{\beta}$-Kenmotsu structure $(\bar\varphi,\bar\xi,\bar\eta,\bar g)$ obtained through a $\mathcal D$-homothetic deformation of constant $\beta$.
\end{proof}

Now, let $(M^{2n+1},\varphi,\xi,\eta,g)$ be a $CR$-integrable almost $\alpha$-Kenmotsu manifold. Let $\widetilde\nabla$ be the canonical connection and $\widetilde R$ its curvature tensor. A straightforward computation using \eqref{canonic} shows that for every vector fields $X,Y,Z$
\begin{align}\label{Rcanonic}
\widetilde R_{XY}Z&= R_{XY}Z+\alpha^2(g(Y+h'Y,Z)(X+h'X)-g(X+h'X,Z)(Y+h'Y))\\
&\quad+\alpha g((\nabla_Xh')Y-(\nabla_Yh')X,Z)\xi-\alpha \eta(Z)((\nabla_Xh')Y-(\nabla_Yh')X),\nonumber
\end{align}
where $R$ is the Riemannian curvature tensor. Consequently, we obtain the following result.

\begin{prop}\label{Rtilde0}
Let $(M^{2n+1},\varphi,\xi,\eta,g)$ be an $\alpha$-Kenmotsu manifold. Then the following conditions are equivalent:
\begin{itemize}
\item[\emph{a)}] $\widetilde\nabla\widetilde R=0$,
\item[\emph{b)}] $\widetilde R=0$,
\item[\emph{c)}] $M^{2n+1}$ has constant Riemannian curvature $k=-\alpha^2$,
\item[\emph{d)}] $M^{2n+1}$ is a locally symmetric Riemannian manifold.
\end{itemize}
\end{prop}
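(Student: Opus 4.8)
The plan is to reduce everything to the rigidity forced by $M$ being $\alpha$-Kenmotsu. Since an $\alpha$-Kenmotsu manifold is normal it is $CR$-integrable, and by Proposition \ref{AKK} one has $h'=0$; hence the canonical connection $\widetilde\nabla$ of Theorem \ref{tildenabla} exists and \eqref{Rcanonic} reduces to
\begin{equation}\label{Rhzero}
\widetilde R_{XY}Z = R_{XY}Z + \alpha^2\bigl(g(Y,Z)X - g(X,Z)Y\bigr).
\end{equation}
The equivalence b) $\Leftrightarrow$ c) is then immediate from \eqref{Rhzero}, since $\widetilde R=0$ means precisely $R_{XY}Z=-\alpha^2(g(Y,Z)X-g(X,Z)Y)$, i.e. constant curvature $-\alpha^2$. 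A space form is locally symmetric, giving c) $\Rightarrow$ d); conversely Theorem \ref{h=0+sym}, applied to the locally symmetric $\alpha$-Kenmotsu manifold $M$ (for which $h'=0$), returns constant curvature $-\alpha^2$, giving d) $\Rightarrow$ c). As b) $\Rightarrow$ a) is trivial, the entire content is the implication a) $\Rightarrow$ b).

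For a) $\Rightarrow$ b) I would first exploit $\widetilde\nabla g=\widetilde\nabla\eta=0$. These give $\widetilde\nabla\xi=0$ (as $\xi=\eta^\sharp$), whence $\widetilde R_{XY}\xi=0$ and, by the skew-symmetry of the curvature operators of a metric connection, $\widetilde R_{XY}W\in\mathcal D$ for all $X,Y,W$. By Theorem \ref{tildenabla} with $h'=0$ the torsion satisfies $\widetilde T(\xi,X)=\tfrac{\alpha}{2}X$ and $\widetilde T(X,Y)=0$ for $X,Y\in\mathcal D$; since $\widetilde\nabla$ preserves $\mathcal D$ and annihilates $\xi$, a short computation then gives $\widetilde\nabla\widetilde T=0$. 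This is the crucial preliminary, as it makes the first Bianchi identity tractable.

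The core step is to evaluate both Bianchi identities for $\widetilde\nabla$ on a triple $(\xi,U,V)$ with $U,V\in\mathcal D$. Under a) the second Bianchi identity becomes $\mathfrak{S}_{X,Y,Z}\,\widetilde R_{\widetilde T(X,Y),Z}=0$; the substitution, together with the torsion values above, collapses it to $\alpha\,\widetilde R_{UV}=0$, so $\widetilde R_{UV}=0$ for all $U,V\in\mathcal D$. Using $\widetilde\nabla\widetilde T=0$, the first Bianchi identity reads $\mathfrak{S}_{X,Y,Z}\,\widetilde R_{XY}Z=\mathfrak{S}_{X,Y,Z}\,\widetilde T(\widetilde T(X,Y),Z)$; the same substitution makes the right-hand side vanish and yields the symmetry $\widetilde R_{\xi U}V=\widetilde R_{\xi V}U$. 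Setting $T(U,V,W):=g(\widetilde R_{\xi U}V,W)$, this symmetry in $(U,V)$ combined with the skew-symmetry $T(U,V,W)=-T(U,W,V)$ (from $\widetilde\nabla g=0$) forces $T\equiv0$ by the standard permutation argument, so $\widetilde R_{\xi U}=0$ on $\mathcal D$ as well. Decomposing arbitrary arguments into their $\xi$- and $\mathcal D$-components then gives $\widetilde R=0$.

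The main obstacle is precisely the mixed curvature $\widetilde R_{\xi U}$: the second Bianchi identity kills only the block $\widetilde R_{UV}$ and gives no grip on $\widetilde R_{\xi U}$, so one must invoke the first Bianchi identity — whence the need to establish $\widetilde\nabla\widetilde T=0$ beforehand — and combine its symmetry conclusion with metric compatibility to complete the vanishing.
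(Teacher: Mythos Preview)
Your argument is correct. The treatment of b)$\Leftrightarrow$c), c)$\Leftrightarrow$d), and b)$\Rightarrow$a) matches the paper; for the implication a)$\Rightarrow$b) you take a genuinely different route. The paper argues via the local warped product structure $M'\times_f N^{2n}$: for basic $X\in\mathcal D$ one has $[\xi,X]=0$, hence $\widetilde\nabla_\xi X=\alpha X$; since for basic $X,Y,Z\in\mathcal D$ the vector $\widetilde R_{XY}Z$ coincides with the fiber curvature $R^N_{XY}Z$ and is therefore again basic, the equation $(\widetilde\nabla_\xi\widetilde R)(X,Y,Z)=0$ collapses to $-2\alpha\,\widetilde R_{XY}Z=0$; the paper then disposes of $\widetilde R_{XY}\xi$ and $\widetilde R_{\xi X}Y$ separately. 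Your proof is entirely intrinsic: you exploit the Bianchi identities of the metric connection with torsion, using the second Bianchi under a) to kill $\widetilde R_{UV}$ on $\mathcal D$, and the first Bianchi (after checking $\widetilde\nabla\widetilde T=0$) plus the permutation argument to kill $\widetilde R_{\xi U}$. This avoids invoking Theorem \ref{warped_almost} and the warped-product curvature formulas, and the method would adapt to any situation where $\widetilde T_\xi$ acts as a nonzero scalar on $\mathcal D$; the paper's approach, by contrast, makes the identification $\widetilde R|_{\mathcal D}=R^N$ transparent and shows at once that $\widetilde R_{\xi X}Y=0$ holds without assuming a). One small remark: since \eqref{Rhzero} together with \eqref{RXYxi} (and the pair symmetry of $R$) already gives $\widetilde R_{\xi X}Y=0$ for every $\alpha$-Kenmotsu manifold, your first-Bianchi step, while correct, is not strictly needed.
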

\begin{proof}
Since the structure is normal, the operator $h'$ vanishes and the equivalence of b) and c) immediately follows from \eqref{Rcanonic}.  In order to prove the equivalence of a) and b), we show that for any $X\in\mathcal D$, $\widetilde \nabla_\xi X=\alpha X$. Indeed, the manifold is locally a warped product of an open interval $M'$, which is tangent to the vector field $\xi$, and a K\"ahler manifold $N^{2n}$, orthogonal to $\xi$. Therefore, $[\xi,X]=0$ for any $X\in\mathcal D$ and applying b) of Theorem \ref{tildenabla}, we have $\widetilde\nabla_\xi X=2\widetilde T(\xi,X)=\alpha X$. We also notice that, since $\widetilde\nabla\varphi=0$, then $\widetilde\nabla_ZX\in{\mathcal D}$ for any $X\in{\mathcal D}$ and $Z\in\frak{X}(M)$. Hence, for any $X,Y,Z\in\mathcal D$, $\widetilde R_{XY}Z\in\mathcal D$. Supposing $\widetilde\nabla\widetilde R=0$,  from $(\widetilde \nabla_\xi\widetilde R)(X,Y,Z)=0$ we get $\widetilde R_{XY}Z=0$. On the other hand, $\widetilde R_{XY}\xi=\widetilde R_{\xi X}Y=0$ for any vector fields $X,Y$, and thus the curvature tensor $\widetilde R$ vanishes. The equivalence of c) and d) is a consequence of Theorem \ref{h=0+sym}.
\end{proof}

%

We shall discuss now the geometry of $CR$-integrable almost $\alpha$-Kenmotsu manifolds such that $h'\ne0$ and $\widetilde\nabla\widetilde T=0$. First of all we prove the following Lemma.

\begin{lem}\label{lem_canonic}
Let $(M^{2n+1},\varphi,\xi,\eta,g)$ be a $CR$-integrable almost $\alpha$-Kenmotsu manifold. Then the following conditions are equivalent:
\begin{itemize}
\item[\emph{a)}] $\widetilde\nabla\widetilde T=0$,
\item[\emph{b)}] $\widetilde\nabla h'=0$,
\item[\emph{c)}] the tensor field $h'$ is $\eta$-parallel and $\nabla_\xi h'=0$.
\end{itemize}
\end{lem}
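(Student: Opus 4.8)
The plan is to establish the chain of equivalences $\mathrm{a)} \Leftrightarrow \mathrm{b)} \Leftrightarrow \mathrm{c)}$ by working directly from the explicit formula \eqref{canonic} for the canonical connection, together with the torsion axioms a), b), c) of Theorem \ref{tildenabla}. The central observation is that the torsion $\widetilde T$ is built out of $\xi$, $\eta$ and $h'$ (via b), which reads $2\widetilde T(\xi,X)=\alpha(X+h'X)$ on $\mathcal D$), while $\varphi$, $g$, $\eta$ are all $\widetilde\nabla$-parallel. Since $\widetilde\nabla\eta=0$ forces $\widetilde\nabla\xi=0$, differentiating the torsion axioms shows that $\widetilde\nabla\widetilde T=0$ is controlled entirely by $\widetilde\nabla h'$; this should give $\mathrm{a)} \Leftrightarrow \mathrm{b)}$ more or less immediately.

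For $\mathrm{b)} \Leftrightarrow \mathrm{c)}$ the idea is to compute $(\widetilde\nabla_Z h')Y$ in terms of the Levi-Civita covariant derivative $(\nabla_Z h')Y$. Using \eqref{canonic} and the fact that $h'$ is symmetric, anticommutes with $\varphi$, and satisfies $h'\xi=0$, I expect to obtain a formula of the shape
\begin{equation*}
(\widetilde\nabla_Z h')Y = (\nabla_Z h')Y + \alpha\, g(Z+h'Z,h'Y)\xi + \alpha\,\eta(Y)(h'Z+h'^2Z) - \eta(Z)(\nabla_\xi h')Y,
\end{equation*}
the precise correction terms coming from the two extra pieces $\alpha g(Z+h'Z,\cdot)\xi$ and $-\alpha\eta(\cdot)(Z+h'Z)$ in \eqref{canonic}. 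Comparing this with the $\eta$-parallelism characterization \eqref{nablah}, one sees that the correction terms are exactly designed to cancel the right-hand side of \eqref{nablah} precisely when $h'$ is $\eta$-parallel and $\nabla_\xi h'=0$. Concretely, I would split the verification into the cases where the arguments lie in $\mathcal D$ versus along $\xi$: testing $(\widetilde\nabla_Z h')Y$ against vectors in $\mathcal D$ recovers the $\eta$-parallelism of $h'$, while the components involving $\xi$ encode the condition $\nabla_\xi h'=0$.

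The main obstacle I anticipate is bookkeeping rather than conceptual. One must carefully track how the non-tensorial $\xi$- and $\eta$-terms in \eqref{canonic} interact with $h'$, using repeatedly that $h'$ is selfadjoint, $h'\xi=0$, and $\eta\circ h'=0$, and that $\widetilde\nabla g=0$ lets one move $h'$ across the metric. A secondary subtlety is confirming that $\widetilde\nabla h'=0$ genuinely encodes \emph{both} halves of c) and not merely the $\eta$-parallelism: the term $\eta(Z)(\nabla_\xi h')Y$ is what carries the condition $\nabla_\xi h'=0$, so I would isolate the case $Z=\xi$ to check that $(\widetilde\nabla_\xi h')Y = (\nabla_\xi h')Y$ (the correction terms vanishing because $h'\xi=0$), which makes $\widetilde\nabla_\xi h'=0 \Leftrightarrow \nabla_\xi h'=0$ transparent. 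Once these computations are organized, the equivalence $\mathrm{b)} \Leftrightarrow \mathrm{c)}$ follows by reading off components in $\mathcal D$ and along $\xi$ separately.
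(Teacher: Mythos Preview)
Your approach is essentially the same as the paper's. For $\mathrm{a)}\Leftrightarrow\mathrm{b)}$ the paper differentiates the torsion axioms exactly as you describe: since $\widetilde\nabla$ preserves $\mathcal D$ and $\widetilde T$ vanishes on $\mathcal D$, one has $(\widetilde\nabla_Z\widetilde T)(X,Y)=0$ for $X,Y\in\mathcal D$ automatically, while $2(\widetilde\nabla_Z\widetilde T)(\xi,X)=\alpha(\widetilde\nabla_Z h')X$; together with $(\widetilde\nabla_Z h')\xi=0$ this gives the equivalence.

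There is, however, an error in your anticipated formula for $(\widetilde\nabla_Z h')Y$. A direct computation from \eqref{canonic}, using $h'\xi=0$ and $\eta\circ h'=0$, gives only
\[
(\widetilde\nabla_Z h')Y \;=\; (\nabla_Z h')Y + \alpha\, g(h'Z+h'^2Z,Y)\,\xi + \alpha\,\eta(Y)\,(h'Z+h'^2Z);
\]
the term $-\eta(Z)(\nabla_\xi h')Y$ does \emph{not} appear. Your own sanity check exposes the inconsistency: with your displayed formula, setting $Z=\xi$ would force $(\widetilde\nabla_\xi h')Y=0$ identically, contradicting your (correct) claim that $(\widetilde\nabla_\xi h')Y=(\nabla_\xi h')Y$. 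Once the formula is corrected, the argument for $\mathrm{b)}\Leftrightarrow\mathrm{c)}$ proceeds as in the paper: since $(\widetilde\nabla_Z h')X\in\mathcal D$ for $X\in\mathcal D$, the vanishing of $\widetilde\nabla h'$ is equivalent to $g((\nabla_Z h')X,Y)=0$ for all $X,Y\in\mathcal D$ and arbitrary $Z$, which is exactly $\eta$-parallelism of $h'$ (when $Z\in\mathcal D$) together with $\nabla_\xi h'=0$ (when $Z=\xi$).
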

\begin{proof}
Recall that $\widetilde\nabla_ZX\in{\mathcal D}$ for any $X\in{\mathcal D}$ and $Z\in\frak{X}(M)$. On the other hand, the torsion $\widetilde T$ vanishes on $\mathcal D$ and thus $(\widetilde\nabla_Z\widetilde T)(X,Y)=0$ for any $X,Y\in{\mathcal D}$ and $Z\in\frak{X}(M)$. Now, applying $\widetilde \nabla\xi=0$ and b) of Theorem \ref{tildenabla}, for any $X\in\mathcal D$ and $Z\in\frak{X}(M)$, we have
\[2(\widetilde\nabla_Z\widetilde T)(\xi,X)=\alpha\widetilde\nabla_Z(X+h'X)-\alpha(\widetilde\nabla_ZX+h'(\widetilde\nabla_ZX))=\alpha(\widetilde\nabla_Zh')X,\]
which proves the equivalence of a) and b), since $(\widetilde\nabla_Zh')\xi=0$. Applying \eqref{canonic}, we get
\[(\widetilde\nabla_Zh')X=\alpha\{(\nabla_Zh')X+\alpha g(h'Z+h'^2Z,X)\xi\}\in\mathcal D\]
and thus, the covariant derivative $\widetilde\nabla h'$ vanishes if and only if for any $X,Y\in\mathcal D$ and $Z\in\frak{X}(M)$, $g((\nabla_Zh')X,Y)=0$, which is equivalent to requiring the $\eta$-parallelism of the tensor field $h'$ and the vanishing of the covariant derivative $\nabla_\xi h'$.
\end{proof}

Under the hypotheses of the above Lemma, Theorem \ref{theo_warped_product} implies that $M^{2n+1}$ is locally isometric to the warped product \eqref{warped_product}, where $M_0$ has dimension $0$ or it is a K\"ahler manifold. For any eigenvalue $\lambda$ of the operator $h'$, each integral submanifold $N$ of the distribution $[\xi]\oplus[\lambda]\oplus[-\lambda]$ is auto-parallel with respect to the canonical connection $\widetilde\nabla$; moreover, the connection induced by $\widetilde\nabla$ on $N$ coincides with the canonical connection associated to the induced almost $\alpha$-Kenmotsu structure.

Let us investigate now the properties of the curvature tensor $\widetilde R$ in the non-normal case.

\begin{theo}\label{theo_curv}
Let $(M^{2n+1},\varphi,\xi,\eta,g)$ be a $CR$-integrable almost $\alpha$-Kenmotsu manifold such that $h'\ne0$ and the canonical connection $\widetilde\nabla$ has parallel torsion. Then the following conditions are equivalent:
\begin{itemize}
\item[\emph{a)}] $\widetilde\nabla\widetilde R=0$,
\item[\emph{b)}] $\widetilde R=0$,
\item[\emph{c)}]$0$ is a simple eigenvalue of $h'$ or the integral submanifolds of the distribution $[\xi]\oplus[0]$ have constant Riemannian curvature $k=-\alpha^2$.
\end{itemize}
\end{theo}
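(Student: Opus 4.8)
The plan is to exploit the warped product structure that the hypotheses force and to show that the canonical curvature $\widetilde R$ is entirely concentrated on the $[0]$-part of the manifold. First I would record the consequences of $\widetilde\nabla\widetilde T=0$: by Lemma \ref{lem_canonic} this is equivalent to $\widetilde\nabla h'=0$ and to the $\eta$-parallelism of $h'$ together with $\nabla_\xi h'=0$, so Theorem \ref{theo_warped_product} applies and $M^{2n+1}$ is locally the warped product \eqref{warped_product}. Since $\widetilde\nabla h'=0$, the eigendistributions $[\xi]$, $[0]$, $[\lambda_i]$, $[-\lambda_i]$ are all $\widetilde\nabla$-parallel, so $\widetilde R_{XY}$ preserves each of them and $\widetilde R$ is block diagonal with respect to this splitting. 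Because $\widetilde\nabla$ is metric and $\widetilde\nabla\xi=0$, one also has $\widetilde R_{XY}\xi=0$ and $g(\widetilde R_{XY}Z,\xi)=0$, so the only components to understand are $\widetilde R_{XY}Z$ with $Z\in\mathcal D$ landing in $\mathcal D$. The implication b)$\Rightarrow$a) is immediate.

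The heart of the argument is to reduce $\widetilde R$ to the intrinsic geometry of the leaves of $\mathcal D$. Using \eqref{canonic} and \eqref{nablaeta}, for $X,Y\in\mathcal D$ the $\xi$-terms cancel and $\widetilde\nabla_XY=(\nabla_XY)^{\mathcal D}$, which is exactly the Levi-Civita connection $\nabla^N$ of the (almost K\"ahler) leaf $N$ of the integrable distribution $\mathcal D$; hence $\widetilde R_{XY}Z=R^N_{XY}Z$ for all $X,Y,Z\in\mathcal D$, where $R^N$ is the curvature of $N$. Next I would show that the mixed terms vanish: evaluating \eqref{Rcanonic} with first slot $\xi$ and using $\nabla_\xi h'=0$ gives $\widetilde R_{\xi X}Y=(R_{\xi X}Y)^{\mathcal D}$ for $X,Y\in\mathcal D$, and since in the warped product \eqref{warped_product} the base $M'$ is one-dimensional with $\xi=\partial_t$, the Riemannian curvature $R_{\xi X}Y$ of two vertical vectors is horizontal, so its $\mathcal D$-component is zero. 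Finally, restricting \eqref{warped_product} to a fixed value of $t$ exhibits $N$ as the Riemannian product $M_0\times M_{\lambda_1}\times M_{-\lambda_1}\times\cdots\times M_{\lambda_r}\times M_{-\lambda_r}$; by Theorem \ref{integrability}, applied to the auto-parallel $(\kappa,\mu)'$-leaves $[\xi]\oplus[\lambda_i]\oplus[-\lambda_i]$, each factor $M_{\pm\lambda_i}$ is flat, while $M_0$ is K\"ahler. Consequently $R^N$, and therefore $\widetilde R$, vanishes except on the $[0]$-block, where $\widetilde R_{XY}Z=R^{M_0}_{XY}Z$. This yields $\widetilde R=0$ if and only if $M_0$ is flat.

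It then remains to translate ``$M_0$ flat'' into condition c) and to treat a). If $0$ is a simple eigenvalue then $[0]=0$, there is no factor $M_0$, and $\widetilde R=0$ trivially. Otherwise $L_0:=[\xi]\oplus[0]$ integrates to the $\alpha$-Kenmotsu warped product $M'\times_{f_0}M_0$ (by Proposition \ref{AKK}, since $h'=0$ and the leaves of $\mathcal D$ are K\"ahler), and $M'\times_{f_0}M_0$ with $f_0=c_0e^{\alpha t}$ has constant curvature $-\alpha^2$ precisely when $M_0$ is flat; thus $\widetilde R=0$ is equivalent to c), proving b)$\Leftrightarrow$c). For a)$\Rightarrow$b) I would use that the leaf $L_0$ is auto-parallel for $\widetilde\nabla$ and carries the induced $\alpha$-Kenmotsu structure whose canonical connection is the restriction of $\widetilde\nabla$; hence $\widetilde\nabla\widetilde R=0$ restricts to $\widetilde\nabla^0\widetilde R^0=0$ on $L_0$, and Proposition \ref{Rtilde0} (the equivalence of a) and b) in the normal case) gives $\widetilde R^0=0$, i.e.\ $R^{M_0}=0$, so by the previous paragraph $\widetilde R=0$.

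The main obstacle I expect is the decomposition of $\widetilde R$ in the second paragraph: showing that the $\xi$-mixed components $\widetilde R_{\xi X}Y$ vanish and that the non-$M_0$ factors of $N$ are flat are the two computations that genuinely use, respectively, the one-dimensionality of the base in \eqref{warped_product} and the full curvature description of $(\kappa,\mu)'$-spaces from Theorem \ref{integrability}. The other delicate point, the passage from $\widetilde\nabla\widetilde R=0$ to $\widetilde R=0$, is cleanly isolated on the $\alpha$-Kenmotsu leaf $L_0$ and thereby reduced to Proposition \ref{Rtilde0}, avoiding a direct estimate of $\widetilde\nabla_\xi\widetilde R$ on $M^{2n+1}$.
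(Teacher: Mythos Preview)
Your argument is correct and reaches the same conclusion as the paper, but the route through the nonzero eigenvalue blocks is genuinely different. The paper never identifies $\widetilde\nabla|_{\mathcal D}$ with the intrinsic Levi-Civita connection of the leaves of $\mathcal D$; instead it first derives the explicit relation \eqref{Rcanonic_parallel} between $\widetilde R$ and $R$ (by combining \eqref{Rcanonic} with \eqref{nablahsym_0}), then, for each nonzero eigenvalue $\lambda$, views $M$ as the warped product $B\times_f M_\lambda$ with $TB=[\lambda]^\perp$, computes the Riemannian curvatures $R_{XY}V$, $R_{VW}X$, $R_{VX}Y$, $R_{XV}W$, $R_{UV}W$ directly from O'Neill's formulas (including an explicit Hessian calculation for $f=ce^{\alpha(1+\lambda)t}$), and substitutes into \eqref{Rcanonic_parallel} to check that each of these $\widetilde R$-components vanishes. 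Your observation that $\widetilde\nabla_XY=(\nabla_XY)^{\mathcal D}$ for $X,Y\in\mathcal D$, hence $\widetilde R|_{\mathcal D^3}=R^N$, together with the Riemannian product decomposition of the leaf $N$ at fixed $t$ and the flatness of the factors $M_{\pm\lambda_i}$ imported from Theorem \ref{integrability}, bypasses those computations entirely; the price is that you lean more heavily on Theorem \ref{integrability}, whereas the paper's proof is closer to re-deriving the needed curvature facts in place. The treatment of the $[\xi]\oplus[0]$ leaf and the use of Proposition \ref{Rtilde0} to pass from a) to b) are essentially identical in both proofs. One point worth making explicit: when you assert that $R_{\xi X}Y$ is horizontal for vertical $X,Y$, you are working in the \emph{multiply} warped product \eqref{warped_product}, not a single warped product $M'\times_f N$; the claim remains true (for $X,Y$ in the same fiber factor it is the standard one-dimensional-base formula, and for $X,Y$ in distinct factors one has $\nabla_XY=0$, hence $R_{\xi X}Y=0$), but it deserves a sentence.
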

\begin{proof}
By Lemma \ref{lem_canonic}, the operator $h'$ is $\eta$-parallel and $\nabla_\xi h'=0$. Hence, applying \eqref{nablahsym_0} and \eqref{Rcanonic}, we obtain
that the curvature tensors $R$ and $\tilde R$ are related by
\begin{align}\label{Rcanonic_parallel}
\widetilde R_{XY}Z&= R_{XY}Z-\alpha^2 (\eta(Y)g(h'X+h'^2X,Z)-\eta(X)g(h'Y+h'^2Y,Z))\xi\\
&\quad +\alpha^2\eta(Z)(\eta(Y)(h'X+h'^2X)-\eta(X)(h'Y+h'^2Y))\nonumber\\
&\quad+\alpha^2(g(Y+h'Y,Z)(X+h'X)-g(X+h'X,Z)(Y+h'Y))\nonumber
\end{align}
for any $X,Y,Z\in\frak{X}(M)$. We know that $M^{2n+1}$ is locally isometric to the warped product \eqref{warped_product}, where $M_0$ has dimension $0$ or it is a K\"ahler manifold.
Let us consider an eigenvalue $\lambda\ne0$ of $h'$ and the warped product $B\times_fM_\lambda$ such that $TB=[\lambda]^\bot$, $TM_\lambda=[\lambda]$ and $f=c e^{\alpha(1+\lambda)t}$, $c>0$. From  Proposition 7.42 in \cite{ONeill} it follows that for any $X,Y\in TB$ and $V,W\in[\lambda]$,
\[R_{XY}V=R_{VW}X=0,\quad R_{VX}Y=-\frac{H^f(X,Y)}{f}V,\quad R_{XV}W=-\frac{g(V,W)}{f}\nabla_X(\mathrm{grad} f).\]
For any vector field $Z$, we have $Z(f)=\eta(Z)\xi(f)=\alpha(1+\lambda)\eta(Z)f$. Therefore,
\begin{align*}
H^f(X,Y)&= X(Yf)-(\nabla_XY)(f)\\
&= \alpha(1+\lambda)(X(\eta(Y))f+\alpha(1+\lambda)\eta(X)\eta(Y)f-\eta(\nabla_XY)f)\\
&= \alpha(1+\lambda)((\nabla_X\eta)(Y)+\alpha(1+\lambda)\eta(X)\eta(Y))f\\
&= \alpha^2(1+\lambda)(g(X+h'X,Y)+\lambda\eta(X)\eta(Y))f,
\end{align*}
where we used \eqref{nablaeta}. Hence,
\[R_{VX}Y=-\alpha^2(1+\lambda)(g(X+h'X,Y)+\lambda\eta(X)\eta(Y))V.\]
Since $\mathrm{grad} f=\alpha(1+\lambda)f\xi$, then
\begin{align*}
\nabla_X(\mathrm{grad} f)&=\alpha(1+\lambda)\{\alpha(1+\lambda)\eta(X)f\xi+\alpha f(X+h'X-\eta(X)\xi)\}\\
&= \alpha^2(1+\lambda)(X+h'X+\lambda\eta(X)\xi)f,
\end{align*}
and thus
\[R_{XV}W=-\alpha^2(1+\lambda)g(V,W)(X+h'X+\lambda\eta(X)\xi).\]
Using \eqref{Rcanonic_parallel}, a straightforward computation shows that
\begin{equation}\label{Rtilde=0}
\widetilde R_{XY}V=\widetilde R_{VW}X=\widetilde R_{VX}Y=\widetilde R_{XV}W=0.
\end{equation}
We know that the distribution $[\xi]\oplus[\lambda]$ has totally geodesic leaves with constant Riemannian curvature $\kappa-2\alpha^2\lambda$, and applying \eqref{Rcanonic_parallel} again, we have
\begin{equation}\label{Rtilde=01}
\widetilde R_{UV}W=0
\end{equation}
for any $U,V,W\in[\lambda]$. It remains to analyze the curvatures $\widetilde R_{XY}Z$, with $X,Y,Z\in TB$. Considered the eigenvalue $-\lambda$, we regard $B$ as the warped product  $B'\times_{f'}M_{-\lambda}$ such that $TB'=[\xi]\oplus\bigoplus_{\mu\ne\pm\lambda}[\mu]$, $TM_{-\lambda}=[-\lambda]$ and $f'=c' e^{\alpha(1-\lambda)t}$, $c'>0$. Analogous computations give \eqref{Rtilde=0} and \eqref{Rtilde=01} for any $U,V,W\in[-\lambda]$ and $X,Y\in TB'$.
In fact this argument can be applied for each non-vanishing eigenvalue of $h'$, proving that if $0$ is a simple eigenvalue, then the curvature tensor $\widetilde R$ vanishes on $M^{2n+1}$.

If $0$ has multiplicity greater than $1$, we have to analyze the curvature tensor $\widetilde R$ on the integral submanifolds of the distribution $[\xi]\oplus[0]$. These leaves are endowed with an almost $\alpha$-Kenmotsu structure with vanishing operator $h'$ and such that the integral manifolds of $[0]$ are K\"ahler. Hence, the leaves of $[\xi]\oplus[0]$ are $\alpha$-Kenmotsu manifolds and, by Proposition \ref{Rtilde0}, the curvature tensor $\widetilde R$ vanishes on them if and only if it is parallel with respect to $\widetilde\nabla$, or equivalently the leaves have constant Riemannian curvature $k=-\alpha^2$.
\end{proof}

\begin{rem}
\emph{Differently from the normal case, which is described in Proposition \ref{Rtilde0}, in the hypotheses of Theorem \ref{theo_curv}, conditions a) and b) are not equivalent to the local Riemannian symmetry. Indeed in this case, combining \eqref{RXYxi} and \eqref{nablahsym_0} it follows that the Riemannian curvature satisfies $R_{XY}\xi=0$ for any $X,Y\in\mathcal D$. By Theorem \ref{theo_symm} it follows that $M^{2n+1}$ is locally symmetric if and only if $\mbox{\emph{Sp}}(h')=\{0,1,-1\}$, with $0$ simple eigenvalue.}
\end{rem}

\begin{cor}\label{Rtilde01}
Let $(M^{2n+1},\varphi,\xi,\eta,g)$ be an almost $\alpha$-Kenmotsu manifold such that $h'\ne0$ and $\xi$ belongs to the $(\kappa,\mu)'$-nullity distribution, $\mu=-2\alpha^2$. Then $\widetilde R=0$ and the Riemannian curvature tensor is given by
\begin{align}\label{Rkmu_alpha}
R_{XY}Z&= \kappa\,\eta(Z)(\eta(Y)X-\eta(X)Y)+\kappa (g(Y,Z)\eta(X)-g(X,Z)\eta(Y))\xi\\
&\quad+\alpha^2(g(Y-h'Y,Z)\eta(X)-g(X-h'X,Z)\eta(Y))\xi\nonumber\\
&\quad+\alpha^2\eta(Z)(\eta(Y)(X-h'X)-\eta(X)(Y-h'Y))\nonumber\\
&\quad-\alpha^2(g(Y+h'Y,Z)(X+h'X)+g(X+h'X,Z)(Y+h'Y))\nonumber
\end{align}
for any $X,Y,Z\in\frak{X}(M)$.
\end{cor}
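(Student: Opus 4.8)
The plan is to deduce the vanishing of $\widetilde R$ from the structural results already established, and then to read off the Riemannian curvature directly from the identity \eqref{Rcanonic_parallel}. First I would invoke Theorem \ref{integrability}: since $h'\ne0$ and $\mu=-2\alpha^2$, the hypothesis forces $\kappa<-\alpha^2$, the structure is $CR$-integrable, the operator $h'$ is $\eta$-parallel with $\nabla_\xi h'=0$, and $\mathrm{Spec}(h')=\{0,\lambda,-\lambda\}$ with $0$ a simple eigenvalue and $\lambda=\sqrt{-1-\kappa/\alpha^2}$. In particular $\alpha^2\lambda^2=-(\kappa+\alpha^2)$, equivalently $\kappa=-\alpha^2(1+\lambda^2)$, a relation I will use repeatedly at the end.

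Next I would certify that $\widetilde R=0$. By Lemma \ref{lem_canonic}, the $\eta$-parallelism of $h'$ together with $\nabla_\xi h'=0$ is equivalent to $\widetilde\nabla\widetilde T=0$, so the canonical connection has parallel torsion and the manifold falls under the hypotheses of Theorem \ref{theo_curv}. Since $0$ is a simple eigenvalue of $h'$, condition (c) of that theorem holds automatically, whence $\widetilde R=0$ by the equivalence (b)$\Leftrightarrow$(c).

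With $\widetilde R=0$ in hand, the final step is purely algebraic: I would set the left-hand side of \eqref{Rcanonic_parallel} to zero and solve for $R_{XY}Z$. The only structural input needed is that, because $\mathcal D=[\lambda]\oplus[-\lambda]$ (as $0$ is simple), the operator satisfies $h'^2=\lambda^2(I-\eta\otimes\xi)$. Substituting this wherever $h'^2$ appears in \eqref{Rcanonic_parallel}, the terms cubic in $\eta$ and the $\xi$-cross terms cancel in pairs, and the surviving contributions organize themselves into three families: the $\xi$-valued part carrying metric coefficients, the $\eta(Z)$-valued tangential part, and the genuinely tangential part $-\alpha^2(g(Y+h'Y,Z)(X+h'X)-g(X+h'X,Z)(Y+h'Y))$. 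Replacing each factor $\alpha^2\lambda^2$ by $-(\kappa+\alpha^2)$ then merges the $\lambda^2$-terms with the explicit $\alpha^2$-terms and reproduces exactly the coefficients $\kappa$ and $\alpha^2$ displayed in \eqref{Rkmu_alpha}.

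The computation is routine but bookkeeping-heavy, so the main obstacle is organizational rather than conceptual: one must keep track of which occurrences of $h'X+h'^2X$ produce a $\lambda^2$-term destined to combine into a $\kappa$-coefficient, and verify that antisymmetry in $X,Y$ is preserved throughout (this antisymmetry is what fixes the relative signs in the tangential term). No geometric ingredient beyond $\widetilde R=0$ and the spectral identity for $h'^2$ is required.
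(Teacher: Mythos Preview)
Your proposal is correct and follows essentially the same route as the paper: invoke Theorem \ref{integrability} for $CR$-integrability, the $\eta$-parallelism of $h'$, $\nabla_\xi h'=0$, and the simplicity of the eigenvalue $0$; then apply Theorem \ref{theo_curv} (via Lemma \ref{lem_canonic}) to conclude $\widetilde R=0$; finally substitute $h'^2=\lambda^2(I-\eta\otimes\xi)$ with $\lambda^2=-1-\kappa/\alpha^2$ into \eqref{Rcanonic_parallel} to read off \eqref{Rkmu_alpha}. Your write-up is in fact a little more explicit than the paper's, which compresses the argument to three sentences.
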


\begin{proof}
The operator $h'$ is $\eta$-parallel and satisfies $\nabla_\xi h'=0$. The eigenvalue $0$ of $h'$ is simple and Theorem \ref{theo_curv} implies that the curvature tensor $\widetilde R$ vanishes. Moreover, since ${h'}^2=\lambda^2(I-\eta\otimes\xi)$, with $\lambda^2=-1-\frac{\kappa}{\alpha^2}$, applying \eqref{Rcanonic_parallel} we get \eqref{Rkmu_alpha}.
\end{proof}

\section{The local classification}

\begin{theo}\label{theo_equiv}
Let $(M^{2n+1},\varphi,\xi,\eta,g)$ and $(\bar M^{2n+1},\bar\varphi,\bar\xi,\bar\eta,\bar g)$ be $CR$-integrable almost $\alpha$-Kenmotsu manifolds with canonical connections $\widetilde\nabla$ and $\widetilde{\bar\nabla}$ respectively. Let us suppose that $\widetilde\nabla$ and $\widetilde{\bar\nabla}$ have parallel torsion and curvature tensors and the operators $h'$ and $\bar h'$ associated to the structures have the same eigenvalues with the same multiplicities.
Then $M^{2n+1}$ and $\bar M^{2n+1}$ are locally equivalent as almost contact metric manifolds.
\end{theo}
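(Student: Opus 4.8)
The plan is to construct a local isometry (equivalently, a local equivalence of almost contact metric structures) by exploiting the fact that a linear connection with parallel torsion and curvature is locally determined by its value at a single point, together with the initial data $(\widetilde T, \widetilde R)$ at that point. This is the Cartan–Ambrose–Hicks type argument adapted to the canonical connection $\widetilde\nabla$ rather than the Levi-Civita connection. Since the structure tensors $\varphi$, $g$, $\eta$ are all $\widetilde\nabla$-parallel (Theorem \ref{tildenabla}), a linear isomorphism between tangent spaces at chosen points that matches these tensors and the torsion/curvature will propagate by $\widetilde\nabla$-parallel transport to a local diffeomorphism preserving the entire structure.

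Let me spell out the steps. First I would fix points $p\in M^{2n+1}$ and $\bar p\in\bar M^{2n+1}$ and construct a linear isometry $F\colon T_pM\to T_{\bar p}\bar M$ with $F\circ\varphi=\bar\varphi\circ F$, $F\xi_p=\bar\xi_{\bar p}$, and $\bar\eta\circ F=\eta$. The hypothesis that $h'$ and $\bar h'$ share eigenvalues with equal multiplicities is exactly what makes this possible: by Lemma \ref{lem_canonic}, parallel torsion forces $\widetilde\nabla h'=0$, so $h'$ is $\widetilde\nabla$-parallel, and one can choose $F$ to carry the eigenspace decomposition of $h'_p$ onto that of $\bar h'_{\bar p}$ eigenvalue-by-eigenvalue, compatibly with $\varphi$ (recall $\varphi$ maps $[\lambda]$ to $[-\lambda]$). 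Next I would check that this $F$ intertwines the torsion and curvature at the base point: by part b) of Theorem \ref{tildenabla}, $\widetilde T_\xi = \tfrac{\alpha}{2}(I+h')$ on $\mathcal D$ and vanishes on pairs in $\mathcal D$, so $F$ matches $\widetilde T$ precisely because it matches $h'$; and since both connections have parallel curvature, the two curvature tensors $\widetilde R$ and $\widetilde{\bar R}$ are each determined by a single fiber value, which one verifies is forced by the same algebraic data (dimension, spectrum of $h'$) via formula \eqref{Rcanonic_parallel} and Theorem \ref{theo_curv}.

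Then the standard theorem on connections with parallel torsion and curvature (the affine analogue of Cartan's theorem, as in the Ambrose–Singer / Kobayashi–Nomizu framework) yields a local affine diffeomorphism $\psi$ with $d\psi_p=F$ such that $\psi^*\widetilde{\bar\nabla}=\widetilde\nabla$, $\psi^*\widetilde{\bar T}=\widetilde T$, and $\psi^*\widetilde{\bar R}=\widetilde R$. Because $F$ already matches $\varphi$, $g$, $\eta$ at $p$ and these tensors are parallel on both sides, $\psi$ automatically pulls them back: $\psi^*\bar\varphi=\varphi$, $\psi^*\bar g=g$, $\psi^*\bar\eta=\eta$. This exhibits $\psi$ as a local equivalence of almost contact metric manifolds, which is the conclusion.

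\textbf{The main obstacle} I anticipate is not the propagation argument, which is classical, but the verification at the base point that the curvature tensors genuinely agree under $F$. One must confirm that $\widetilde R_p$ and $\widetilde{\bar R}_{\bar p}$ are determined purely by the shared algebraic invariants (the dimension and the spectrum of $h'$ with multiplicities), with no further freedom. Here Theorem \ref{theo_curv} is decisive: under parallel torsion and curvature with $h'\ne 0$, either $0$ is a simple eigenvalue or the $[\xi]\oplus[0]$-leaves have curvature $-\alpha^2$, and in the relevant reduced situation $\widetilde R$ vanishes outright on the non-zero eigendistributions (equations \eqref{Rtilde=0}–\eqref{Rtilde=01}); the only surviving curvature lives on the $\alpha$-Kenmotsu factor $[\xi]\oplus[0]$, whose geometry is rigid. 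So the delicate point is to organize the eigendistribution decomposition so that $F$ respects each block and to check that on each block the curvature is the universal expression dictated by $\alpha$ and the eigenvalue, leaving $F$ no obstruction to matching $\widetilde R$ globally in a neighborhood.
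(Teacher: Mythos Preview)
Your approach is correct and coincides with the paper's: construct a linear isometry $F$ between tangent spaces that matches $\varphi,\xi,\eta,g$ and the eigenspace decomposition of $h'$, check that $F$ intertwines torsion and curvature, and invoke the Kobayashi--Nomizu affine extension theorem to produce a local affine diffeomorphism carrying the parallel structure tensors to one another. The ``main obstacle'' you anticipate dissolves immediately, since by Theorem~\ref{theo_curv} (when $h'\ne 0$) and Proposition~\ref{Rtilde0} (when $h'=0$) the hypothesis $\widetilde\nabla\widetilde R=0$ already forces $\widetilde R\equiv 0$ on both manifolds, so matching the curvatures under $F$ is trivial and no block-by-block analysis is needed.
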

\begin{proof}
Let us suppose that $h'$ and $\bar h'$ have the same eigenvalues with the same multiplicities. Fixed two points $p\in M^{2n+1}$ and $q\in \bar M^{2n+1}$, we can choose orthonormal bases $\{\xi_p,e_1,\ldots,e_n,\varphi_p e_1,\ldots,\varphi_p e_n\}$ of $T_pM^{2n+1}$ and $\{\bar\xi_q,\bar e_1,\ldots,\bar e_n,\bar\varphi_q \bar e_1,\ldots,\bar\varphi_q \bar e_n\}$ of $T_q\bar M^{2n+1}$ in such a way that, for any $i=1,\ldots,n$, $e_i$ and $\bar e_i$ are eigenvectors of $h'_p$ and $\bar h'_q$, respectively, with eigenvalue $\lambda_i$, while $\varphi_p e_i$ and  $\bar \varphi_q \bar e_i$ are eigenvectors of $h'_p$ and $\bar h'_q$ with eigenvalue $-\lambda_i$.
We define a linear isometry $F:T_pM^{2n+1}\to T_q\bar M^{2n+1}$ such that
\[F(\xi_p)=\bar\xi_q,\qquad F(e_i)=\bar e_i,\qquad F(\varphi_p e_i)=\bar\varphi_q\bar e_i\]
for every $i=1,\ldots,n$. Then, we have
\[F^*\bar\eta_q=\eta_p,\qquad F^*\bar\varphi_q=\varphi_p,\qquad F^*\bar h'_q=h'_p.\]
From Theorem \ref{tildenabla}, the torsion tensors satisfy $F^*\widetilde{\bar T_q}=\widetilde T_p$. On the other hand, the curvatures $\widetilde{\bar R}$ and $\widetilde R$ vanish. It follows that there exists an affine diffeomorphism $f$ of a neighborhood $U$ of $p$ onto a neighborhood $V$ of $q$ such that $f(p)=q$ and the differential of $f$ at $p$ coincides with $F$ \cite{KN}.
The local diffeomorphism $f$ maps the structure tensors $(\varphi,\xi,\eta,g)$ to $(\bar\varphi,\bar\xi,\bar\eta,\bar g)$, since they are parallel with respect to $\widetilde\nabla$ and $\widetilde{\bar\nabla}$ respectively. Hence $M^{2n+1}$ and $\bar M^{2n+1}$ are locally equivalent as almost contact metric spaces.
\end{proof}

\begin{theo}\label{theo_class}
Let $(M^{2n+1},\varphi,\xi,\eta,g)$ and $(\bar M^{2n+1},\bar\varphi,\bar\xi,\bar\eta,\bar g)$ be $CR$-integrable almost $\alpha$ and almost $\bar\alpha$-Kenmotsu manifolds with canonical connections $\widetilde\nabla$ and $\widetilde{\bar\nabla}$ respectively. Let us suppose that $\widetilde\nabla$ and $\widetilde{\bar\nabla}$ have parallel torsion and curvature tensors. Then $M^{2n+1}$ and $\bar M^{2n+1}$ are locally equivalent as almost contact metric manifolds, up to $\mathcal D$-homothetic deformations, if and only if the operators $h'$ and $\bar h'$ associated to the structures have the same eigenvalues with the same multiplicities.
\end{theo}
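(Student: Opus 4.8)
The plan is to reduce Theorem \ref{theo_class} to Theorem \ref{theo_equiv}, which already settles the case of two structures sharing the same constant $\alpha$, by using a single $\mathcal{D}$-homothetic deformation to align the two constants. The two facts I would rely on are that the operator $h'$ is unchanged by the deformation \eqref{deformation} (the operators associated to a structure and to its deformation coincide) and that the canonical connection is invariant under such deformations (Theorem \ref{tildenabla}). I would also use that the almost $CR$-structure $(\mathcal{D},J)$ is literally preserved by \eqref{deformation}, since $\mathcal{D}=\ker\eta=\ker\bar\eta$ and $\bar\varphi=\varphi$, so $CR$-integrability is transported along the deformation.

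For the sufficiency, I would assume that $h'$ and $\bar h'$ have the same eigenvalues with the same multiplicities. Since the paper works with $\alpha,\bar\alpha>0$, the constant $\beta=\bar\alpha/\alpha$ is positive, and applying \eqref{deformation} with this $\beta$ to $(\bar\varphi,\bar\xi,\bar\eta,\bar g)$ produces a structure $(\bar\varphi',\bar\xi',\bar\eta',\bar g')$ on $\bar M^{2n+1}$ which is almost $\alpha$-Kenmotsu (since $\bar\alpha/\beta=\alpha$). I would then verify that this deformed structure satisfies all the hypotheses of Theorem \ref{theo_equiv} relative to $(M^{2n+1},\varphi,\xi,\eta,g)$: it is $CR$-integrable, its canonical connection is still $\widetilde{\bar\nabla}$ by the invariance in Theorem \ref{tildenabla}, hence has parallel torsion and curvature by hypothesis, and its operator is again $\bar h'$, carrying the prescribed eigenvalues and multiplicities. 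Theorem \ref{theo_equiv} then yields a local equivalence of almost contact metric structures between $M^{2n+1}$ and the deformed $\bar M^{2n+1}$, which is precisely a local equivalence of $M^{2n+1}$ and $\bar M^{2n+1}$ up to a $\mathcal{D}$-homothetic deformation.

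For the necessity, I would start from a local equivalence up to $\mathcal{D}$-homothetic deformations: after deforming $\bar M^{2n+1}$ by some constant $\beta>0$ there is a local diffeomorphism $f$ carrying $(\varphi,\xi,\eta,g)$ to the deformed structure $(\bar\varphi',\bar\xi',\bar\eta',\bar g')$. The decisive step is to observe that $\alpha$ is an invariant of the structure: pulling back $d\bar\Phi'=2\frac{\bar\alpha}{\beta}\bar\eta'\wedge\bar\Phi'$ by $f$ and comparing with $d\Phi=2\alpha\eta\wedge\Phi$, using $\eta\wedge\Phi\neq0$, forces $\frac{\bar\alpha}{\beta}=\alpha$. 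Consequently the deformed structure is again almost $\alpha$-Kenmotsu, and since $h'=\frac{1}{2\alpha}(\mathcal{L}_\xi\varphi)\circ\varphi$ is built naturally from the almost contact metric structure and the common constant $\alpha$, the differential of $f$ conjugates $h'$ to the operator of the deformed structure, which equals $\bar h'$ by invariance. Hence $h'$ and $\bar h'$ are pointwise conjugate and share eigenvalues and multiplicities.

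The real content of the statement already lies in Theorem \ref{theo_equiv}, whose proof builds the local isometry from the vanishing of $\widetilde R$ via the affine-diffeomorphism argument of \cite{KN}; in Theorem \ref{theo_class} the work is essentially bookkeeping with invariances. The only delicate point, which I expect to be the main obstacle, is the necessity direction, where one must verify that $\alpha$ is pinned down by the structure so that the deformation constant is forced to be $\bar\alpha/\alpha$. Without this observation the isomorphism would only conjugate $h'$ to a nonzero scalar multiple $\frac{\bar\alpha}{\alpha\beta}\bar h'$ of $\bar h'$, and the eigenvalues would agree merely up to a common rescaling rather than exactly.
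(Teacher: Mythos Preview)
Your proposal is correct and follows essentially the same route as the paper: both reduce to Theorem \ref{theo_equiv} by applying a single $\mathcal{D}$-homothetic deformation to align the two constants (the paper deforms $M$ with $\beta=\alpha/\bar\alpha$, you deform $\bar M$ with $\beta=\bar\alpha/\alpha$, a cosmetic difference), and both invoke the invariance of $h'$ and of the canonical connection under \eqref{deformation}. Your treatment of the necessity direction is actually more careful than the paper's: the paper simply asserts that invariance of $h'$ under the deformation forces the eigenvalues to match, while you explicitly check that $\alpha$ is recovered from $d\Phi=2\alpha\,\eta\wedge\Phi$, so that the local equivalence conjugates $h'$ to $\bar h'$ rather than to a rescaling of it---a point the paper leaves implicit.
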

\begin{proof}
Let us suppose that $h'$ has eigenvalues $0,\lambda_1,\ldots,\lambda_n,-\lambda_1,\ldots,-\lambda_n$, with $0\leq\lambda_i\leq \lambda_j$ for any $i\leq j$. Analogously, let $0,\bar\lambda_1,\ldots,\bar\lambda_n,-\bar\lambda_1,\ldots,-\bar\lambda_n$ be the eigenvalues of $\bar h'$, with $0\leq\bar\lambda_i\leq \bar\lambda_j$ for any $i\leq j$. Since a $\mathcal D$-homothetic deformation of the structure leaves the operator $h'$ invariant, if $M^{2n+1}$ and $\bar M^{2n+1}$ are locally equivalent up to $\mathcal D$-homothetic deformations, then $\lambda_i=\bar \lambda_i$ for any $i=1,\ldots, n$.
Conversely, let us suppose $\lambda_i=\bar \lambda_i$ for any $i=1,\ldots, n$. We apply a $\mathcal D$-homothetic deformation with constant $\beta= \frac{\alpha}{\bar\alpha}$ to the structure $(\varphi,\xi,\eta,g)$, thus obtaining an almost $\bar\alpha$-Kenmotsu structure $(\varphi_1,\xi_1,\eta_1, g_1)$ on $M^{2n+1}$ for which the canonical connection has parallel torsion and vanishing curvature and the operator $h_1'$ has eigenvalues $0,\lambda_1,\ldots,\lambda_n,-\lambda_1,\ldots,-\lambda_n$. From Theorem \ref{theo_equiv} it follows that $(M^{2n+1},\varphi_1,\xi_1,\eta_1, g_1)$ and $(\bar M^{2n+1},\bar \varphi,\bar \xi,\bar \eta,\bar g)$ are locally equivalent as almost contact metric spaces.
\end{proof}

For any odd dimension $2n+1$ and for any nonnegative and not all vanishing real numbers $\lambda_1,\ldots,\lambda_n$, we give an example of a $CR$-integrable almost $\alpha$-Kenmotsu manifold whose canonical connection has parallel torsion and vanishing curvature and such that the operator $h'$ has eigenvalues $0,\lambda_1,\ldots,\lambda_n,-\lambda_1,\ldots,-\lambda_n$. The example is given by a Lie group endowed with a left invariant almost $\alpha$-Kenmotsu structure.

Let $G$ be the connected and simply connected Lie group of real matrices of the form
\[A=\begin{pmatrix}
e^{-\alpha(1+\lambda_1)t}&0&\cdots&0&0&0&x_1\\
0&e^{-\alpha(1-\lambda_1)t}&\cdots&0&0&0&y_1\\
\vdots&\vdots&\ddots&\vdots&\vdots&\vdots&\vdots&\\
0&0&\cdots&e^{-\alpha(1+\lambda_n)t}&0&0&x_n\\
0&0&\cdots&0&e^{-\alpha(1-\lambda_n)t}&0&y_n\\
0&0&\cdots&0&0&1&t\\
0&0&\cdots&0&0&0&1\end{pmatrix},\]
which is a subgroup of the affine group $\mbox{\emph{Aff}}(2n+1,\mathbb{R})$ and
whose Lie algebra $\frak{g}$ is given by the real matrices
\[X=\begin{pmatrix}
-\alpha(1+\lambda_1)c&0&\cdots&0&0&0&a_1\\
0&-\alpha(1-\lambda_1)c&\cdots&0&0&0&b_1\\
\vdots&\vdots&\ddots&\vdots&\vdots&\vdots&\vdots&\\
0&0&\cdots&-\alpha(1+\lambda_n)c&0&0&a_n\\
0&0&\cdots&0&-\alpha(1-\lambda_n)c&0&b_n\\
0&0&\cdots&0&0&0&c\\
0&0&\cdots&0&0&0&0\end{pmatrix}.\]
For any $i=1,\ldots,n$ denote by $X_i$ and $Y_i$ the matrices in $\frak{g}$ whose coefficients are all vanishing except for $a_i=1$ and $b_i=1$ respectively. Let $\xi$ be the matrix corresponding to $c=1$ and $a_i=b_i=0$, $i=1,\dots,n$. Then $\{\xi,X_1,\ldots,X_n,Y_1,\ldots,Y_n\}$ is a basis of $\frak{g}$ which satisfies
\[[\xi,X_i]=-\alpha(1+\lambda_i)X_i,\qquad [\xi,Y_i]=-\alpha(1-\lambda_i)Y_i,\]\vspace{-0.55cm}
\[[X_i,X_j]=[X_i,Y_j]=[Y_i,X_j]=[Y_i,Y_j]=0.\]
The above relations imply that $G$ is a solvable non-nilpotent Lie group.
We consider the
endomorphism $\varphi:\mathfrak{g}\rightarrow \mathfrak{g}$ and the
1-form $\eta: \mathfrak{g}\rightarrow\mathbb{R}$ such that
\begin{eqnarray*}
\varphi(\xi)=0,\quad
\varphi(X_i)=Y_i,\quad\varphi(Y_i)=-X_i,\quad
\eta(\xi)=1,\quad\eta(X_i)=\eta(Y_i)=0,
\end{eqnarray*}
for any $i=1,\ldots,n$, and denote by $g$ the inner product on
$\mathfrak{g}$ such that the basis $\{\xi,X_i,Y_i\}$ is
orthonormal.
The tensors defined on $\frak g$ determine a left invariant $CR$-integrable almost $\alpha$-Kenmotsu structure $(\varphi,\xi,\eta,g)$ on $G$. Each $X_i$ is an eigenvector of $h'$ with eigenvalue $\lambda_i$, while each $Y_i$ is eigenvector with eigenvalue $-\lambda_i$. Moreover, the canonical connection $\widetilde\nabla$ coincides with the left invariant connection on the Lie group, which has vanishing curvature and parallel torsion. Indeed, denoting by $\nabla'$ the left invariant connection on $G$, the structure tensor fields $\varphi$, $\xi$, $\eta$, $g$ are parallel with respect to $\nabla'$. Since the torsion $T'$ is given by $2T'(X,Y)=-[X,Y]$ for any $X,Y\in\frak{g}$,  $T'$ satisfies a), b), c) of Theorem \ref{tildenabla}, so that $\nabla'$ coincides with the canonical connection associated to the structure $(\varphi,\xi,\eta,g)$.

Finally, considering the coordinate system $\{t,x_1,y_1,\ldots,x_n,y_n\}$ on the Lie group $G$, we have
\[X_i=e^{-\alpha(1+\lambda_i)t}\frac{\partial}{\partial x_i},\qquad Y_i=e^{-\alpha(1-\lambda_i)t}\frac{\partial}{\partial y_i},\]
\begin{equation}\label{local1}
\xi=\frac{\partial}{\partial t},\qquad \eta=dt,\qquad \varphi=\sum_{i=1}^ne^{2\alpha\lambda_i t}dx_i\otimes\frac{\partial}{\partial y_i}-\sum_{i=1}^ne^{-2\alpha\lambda_i t}dy_i\otimes\frac{\partial}{\partial x_i},
\end{equation}
\begin{equation}\label{local2}
g=dt\otimes dt+\sum_{i=1}^ne^{2\alpha(1+\lambda_i)t}dx_i\otimes dx_i+\sum_{i=1}^ne^{2\alpha(1-\lambda_i)t}dy_i\otimes dy_i.
\end{equation}
Therefore, from Theorem \ref{theo_equiv} we have the following
\begin{prop}\label{local_prop}
Let $(M^{2n+1},\varphi,\xi,\eta,g)$ be an almost $\alpha$-Kenmotsu manifold. Then $M^{2n+1}$ is a $CR$-integrable almost $\alpha$-Kenmotsu manifold with canonical connection $\widetilde\nabla$ satisfying $\widetilde\nabla\widetilde T=0$  and $\widetilde\nabla\widetilde R=0$ if and only if for any point $p\in M^{2n+1}$ there exists an open neighbourhood with local coordinates $\{t,x_1,y_1,\ldots,x_n,y_n\}$ on which \eqref{local1} and \eqref{local2} hold.
\end{prop}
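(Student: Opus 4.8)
The statement is an equivalence, and the plan is to read it as an application of Theorem \ref{theo_equiv}, with the explicitly constructed Lie group $G$ serving as a universal local model. The substantive content has in fact already been assembled in the paragraphs preceding the statement, where it is verified that $G$, with its left invariant structure $(\varphi,\xi,\eta,g)$, is a $CR$-integrable almost $\alpha$-Kenmotsu manifold whose canonical connection coincides with the left invariant connection $\nabla'$, hence has vanishing curvature and parallel torsion, and that in the coordinates $\{t,x_1,y_1,\ldots,x_n,y_n\}$ its structure tensors are given by \eqref{local1} and \eqref{local2}, with $h'$ admitting the eigenvalues $0,\lambda_1,\ldots,\lambda_n,-\lambda_1,\ldots,-\lambda_n$.

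For the ``if'' direction I would argue that the three properties in question---$CR$-integrability and the vanishing of $\widetilde\nabla\widetilde T$ and of $\widetilde\nabla\widetilde R$---are local and tensorial. If around every point of $M^{2n+1}$ there exist coordinates in which \eqref{local1} and \eqref{local2} hold, then on each such neighbourhood the almost contact metric structure coincides, under the coordinate identification, with that of an open subset of $G$. Since these properties hold on $G$, they transfer to $M^{2n+1}$. This direction is immediate once the properties of $G$ are in hand.

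For the ``only if'' direction, assume $M^{2n+1}$ is $CR$-integrable almost $\alpha$-Kenmotsu with $\widetilde\nabla\widetilde T=0$ and $\widetilde\nabla\widetilde R=0$. By Lemma \ref{lem_canonic}, the operator $h'$ is $\eta$-parallel and satisfies $\nabla_\xi h'=0$, so by Theorem \ref{theo_warped_product} its eigenvalues are locally constant. I would then record the spectrum of $h'|_{\mathcal D}$ as $\pm\lambda_1,\ldots,\pm\lambda_n$ (with $\lambda_i\ge 0$, listed with multiplicity, and with as many zeros as half the rank of $[0]$), and build the model $G$ attached to exactly these $\lambda_i$ and the given $\alpha$. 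By construction $h'$ on $M^{2n+1}$ and $h'$ on $G$ have the same eigenvalues with the same multiplicities, and both canonical connections have parallel torsion and curvature; Theorem \ref{theo_equiv} then produces a local equivalence of almost contact metric manifolds between a neighbourhood of each $p\in M^{2n+1}$ and a neighbourhood in $G$. Pulling back the coordinates $\{t,x_1,y_1,\ldots,x_n,y_n\}$ through this equivalence yields coordinates on $M^{2n+1}$ in which \eqref{local1} and \eqref{local2} hold.

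The only delicate bookkeeping, and the step I expect to require the most care, is the matching of eigenvalue multiplicities, in particular the treatment of the zero eigenvalue: the summand $[0]$ of $h'$ orthogonal to $\xi$ must be reproduced by taking the corresponding $\lambda_i$ equal to $0$ in the model, so that the $X_i,Y_i$ with $\lambda_i=0$ span $[0]$ on $G$, while the eigenvalue $0$ of $\xi$ is automatic. This also covers the degenerate case $h'=0$, realized by taking all $\lambda_i=0$, which recovers the normal, constant curvature $-\alpha^2$ situation of Proposition \ref{Rtilde0}. Once the list $\lambda_1,\ldots,\lambda_n$ is chosen so that the whole spectrum of $h'|_{\mathcal D}$ is reproduced, the hypotheses of Theorem \ref{theo_equiv} are met verbatim and the argument closes, with no genuinely new computation beyond the already established properties of $G$.
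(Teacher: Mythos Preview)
Your proposal is correct and matches the paper's approach exactly: the paper introduces the proposition with the single line ``Therefore, from Theorem \ref{theo_equiv} we have the following'', so the intended argument is precisely the one you spell out---use the explicit Lie group $G$ as the local model and invoke Theorem \ref{theo_equiv} to produce the local equivalence (and hence the coordinates), with the converse being immediate from the established properties of $G$. Your added care about the eigenvalue bookkeeping, including the degenerate case $h'=0$, is a reasonable elaboration of what the paper leaves implicit.
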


As a consequence of Theorem \ref{theo_class}, we can associate to each almost $\alpha$-Kenmotsu $(\kappa,\mu)'$-space $M^{2n+1}$, with $h'\ne0$, the real number
\[I_{M^{2n+1}}=\frac{\kappa}{\alpha^2},\]
which classifies such spaces up to $\mathcal D$-homothetic deformations, as stated in the following Theorem.

\begin{theo}
Let $(M^{2n+1},\varphi,\xi,\eta,g)$ be an almost $\alpha$-Kenmotsu manifold with $h'\ne0$ and $\xi$ belonging to the $(\kappa,\mu)'$-nullity distribution, $\mu=2\alpha^2$, and let $(\bar M^{2n+1},\bar \varphi,\bar \xi,\bar \eta,\bar g)$ be an almost $\bar \alpha$-Kenmotsu manifold with $\bar h'\ne0$ and $\bar\xi$ belonging to the $(\bar\kappa,\bar\mu)'$-nullity distribution, $\bar\mu=-2\bar\alpha^2$. Then, $M^{2n+1}$ and $\bar M^{2n+1}$ are locally equivalent up to a $\mathcal D$-homothetic deformation, as almost contact metric spaces, if and only if $I_{M^{2n+1}}=I_{\bar M^{2n+1}}$.
\end{theo}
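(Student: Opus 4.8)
The plan is to reduce the statement to Theorem \ref{theo_class} by showing that, for almost $\alpha$-Kenmotsu $(\kappa,\mu)'$-spaces with $h'\ne0$, the hypotheses of that theorem are automatically satisfied and that the full spectral data of $h'$ is encoded by the single scalar $I_{M^{2n+1}}=\kappa/\alpha^2$. First I would invoke Theorem \ref{integrability} in the case $\kappa<-\alpha^2$ (which is forced by $h'\ne0$): this yields $\mu=-2\alpha^2$, the $CR$-integrability of the structure, and $\mathrm{Spec}(h')=\{0,\lambda,-\lambda\}$ with $0$ a simple eigenvalue and $\lambda=\sqrt{-1-\kappa/\alpha^2}$. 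The same conclusions apply to $\bar M^{2n+1}$, giving $\bar\lambda=\sqrt{-1-\bar\kappa/\bar\alpha^2}$.

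Next I would verify that both manifolds fall within the scope of Theorem \ref{theo_class}. By Lemma \ref{lem_canonic}, the $\eta$-parallelism of $h'$ together with $\nabla_\xi h'=0$ (both granted by Theorem \ref{integrability}) gives $\widetilde\nabla\widetilde T=0$; by Corollary \ref{Rtilde01} the canonical curvature satisfies $\widetilde R=0$, whence trivially $\widetilde\nabla\widetilde R=0$. The identical statements hold for $\widetilde{\bar\nabla}$. Thus $M^{2n+1}$ and $\bar M^{2n+1}$ are $CR$-integrable almost $\alpha$- (respectively $\bar\alpha$-) Kenmotsu manifolds whose canonical connections have parallel torsion and curvature, which is precisely the setting of Theorem \ref{theo_class}. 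Applying that theorem, $M^{2n+1}$ and $\bar M^{2n+1}$ are locally equivalent up to a $\mathcal D$-homothetic deformation if and only if $h'$ and $\bar h'$ have the same eigenvalues with the same multiplicities.

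The final step is to translate this spectral condition into the asserted scalar equality. Since $0$ is a simple eigenvalue, its eigenspace is spanned by $\xi$, while $[\lambda]$ and $[-\lambda]$ together fill the $2n$-dimensional distribution $\mathcal D$; because $\varphi$ interchanges $[\lambda]$ and $[-\lambda]$, these have equal multiplicity $n$. The same multiplicity pattern $(1,n,n)$ occurs on $\bar M^{2n+1}$ as both manifolds have dimension $2n+1$. Hence the multiplicities always coincide, and the spectral condition collapses to $\lambda=\bar\lambda$. As $\lambda=\sqrt{-1-\kappa/\alpha^2}=\sqrt{-1-I_{M^{2n+1}}}$ is a strictly decreasing bijection of $I_{M^{2n+1}}\in(-\infty,-1)$ onto $(0,\infty)$, we obtain $\lambda=\bar\lambda\iff I_{M^{2n+1}}=I_{\bar M^{2n+1}}$, which finishes the argument.

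The main obstacle is conceptual rather than computational: one must check carefully that the eigenvalue multiplicities are forced by the dimension alone, so that the a priori stronger hypothesis of Theorem \ref{theo_class} (matching both eigenvalues \emph{and} multiplicities) reduces to the single scalar identity. This hinges on $0$ being a simple eigenvalue and on $\varphi$ mapping $[\lambda]$ isomorphically onto $[-\lambda]$, which pins down the pattern $(1,n,n)$; once this is secured, the equivalence follows immediately from the monotone dependence of $\lambda$ on $\kappa/\alpha^2$.
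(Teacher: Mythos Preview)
Your proposal is correct and follows essentially the same route as the paper: reduce to Theorem~\ref{theo_class} after identifying the spectrum of $h'$ via Theorem~\ref{integrability}. You are simply more explicit than the paper in verifying the hypotheses of Theorem~\ref{theo_class} (CR-integrability, $\widetilde\nabla\widetilde T=0$ via Lemma~\ref{lem_canonic}, $\widetilde R=0$ via Corollary~\ref{Rtilde01}) and in checking that the multiplicity pattern $(1,n,n)$ is forced by the dimension, all of which the paper leaves implicit.
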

\begin{proof}
The operators $h'$ and $\bar h'$ have eigenvalues $0,\lambda,-\lambda$ and $0,\bar\lambda,-\bar\lambda$ respectively, where $0$ is simple, $\lambda=\sqrt{-1-\frac{\kappa}{\alpha^2}}$ and $\bar\lambda=\sqrt{-1-\frac{\bar\kappa}{\bar\alpha^2}}$. The result immediately follows from Theorem \ref{theo_class}. In particular, if $I_{M^{2n+1}}=I_{\bar M^{2n+1}}$, we have to apply a $\mathcal D$-homothetic deformation to the structure $(\varphi,\xi,\eta,g)$ with constant $\beta=\frac{\alpha}{\bar\alpha}= \sqrt{\frac{\kappa}{\bar\kappa}}$.
\end{proof}

Notice that $I_{M^{2n+1}}<-1$ since $\kappa<-\alpha^2$. By Corollary \ref{cor_symm}, $M^{2n+1}$ is locally symmetric if and only if $I_{M^{2n+1}}=-2$. For any dimension $2n+1$ and for any value of the invariant $I<-1$, an explicit example of these manifolds is given by the Lie group $G$ described above, with $\lambda_1=\ldots=\lambda_n=\lambda$. Indeed, considered the left invariant almost $\alpha$-Kenmotsu structure $(\varphi,\xi,\eta,g)$, by Proposition \ref{prop_0lambda}, the characteristic vector field $\xi$ belongs $(\kappa,\mu)'$-nullity distribution, with $\kappa= -\alpha^2(1+\lambda^2)$ and
$\mu=-2\alpha^2$. The invariant is
\[I_G=-1-\lambda^2\]
which attains any real value smaller than $-1$.

\begin{rem}
\emph{In \cite{Bo_classification} E. Boeckx introduces a scalar invariant which classifies, up to $\mathcal D$-homothetic deformations, non-Sasakian contact metric manifolds whose characteristic vector field belongs to a $(\kappa,\mu)$-nullity distribution. The proof of the equivalence theorem involves the Levi-Civita connection and the properties of the covariant derivatives of the Riemannian curvature $R$ and the structure tensors $\varphi,\xi,\eta,g$. Here, the classification theorem for almost $\alpha$-Kenmotsu $(\kappa,\mu)'$-spaces, is obtained as a consequence of the more general Theorem \ref{theo_class}, which involves the canonical connection and the parallelism  with respect to it of the torsion $\widetilde T$, the curvature $\widetilde R$ and the structure tensors. One could wonder if it is possible to prove the equivalence theorem for non-Sasakian $(\kappa,\mu)$-contact metric spaces by using the Tanaka-Webster connection. The answer is negative, since in this case the torsion tensor $\widetilde T$ and the curvature tensor $\widetilde R$ of the Tanaka-Webster connection $\widetilde\nabla$ in general are not parallel with respect to $\widetilde \nabla$; this happens if and only if $\mu=2$, as proved in \cite{BoCho_symm}.}
\end{rem}

We conclude the analysis of the local geometry of almost $\alpha$-Kenmotsu $(\kappa,\mu)'$-spaces with the following result.
\begin{prop}
Let $(M^{2n+1},\varphi,\xi,\eta,g)$ be an almost $\alpha$-Kenmotsu manifold such that $h'\ne0$ and $\xi$ belongs to the $(\kappa,\mu)'$-nullity distribution, $\mu=-2\alpha^2$. Let $g'$ be the Riemannian metric locally defined by the $\mathcal D$-conformal change
\[g'=e^{-2\alpha t}g+(1-e^{-2\alpha t})\eta\otimes \eta.\]
Then $(\varphi,\xi,\eta,g')$ is an almost cosymplectic structure such that $\xi$ belongs to the $\kappa_c$-nullity distribution, with $\kappa_c=\kappa+\alpha^2$.
\end{prop}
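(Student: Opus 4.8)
The plan is to treat the three assertions separately — that $(\varphi,\xi,\eta,g')$ is almost contact metric, that it is almost cosymplectic, and that $\xi$ lies in the $\kappa_c$-nullity distribution of $g'$ — and to carry out the curvature computation at the level of the Levi-Civita connection rather than in coordinates. Throughout write $\sigma=e^{-2\alpha t}$; since $d\eta=0$ the function $t$ is the local potential with $\eta=dt$, $\xi(t)=1$ and $\mathrm{grad}_g t=\xi$, so that $d\sigma=-2\alpha\sigma\,\eta$ and $\mathrm{grad}_g\sigma=-2\alpha\sigma\,\xi$.

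First, the compatibility $g'(\varphi X,\varphi Y)=g'(X,Y)-\eta(X)\eta(Y)$ is immediate: for $X,Y\in\mathcal{D}$ both sides reduce to $\sigma\,g(\varphi X,\varphi Y)=\sigma(g(X,Y)-\eta(X)\eta(Y))$, while $g'(\xi,\xi)=1$ and $g'(\xi,\mathcal{D})=0$ handle the $\xi$-direction; thus $(\varphi,\xi,\eta,g')$ is almost contact metric. Its fundamental form is $\Phi'=\sigma\Phi$, the correction $(1-\sigma)\eta\otimes\eta$ contributing nothing because $\eta\circ\varphi=0$. Hence, using $d\Phi=2\alpha\eta\wedge\Phi$ from \eqref{def_alpha},
\[
d\Phi'=d\sigma\wedge\Phi+\sigma\,d\Phi=-2\alpha\sigma\,\eta\wedge\Phi+2\alpha\sigma\,\eta\wedge\Phi=0,
\]
while $d\eta=0$ is unchanged, so the deformed structure is almost cosymplectic (i.e. $d\eta=0$ and $d\Phi'=0$).

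For the nullity condition I would determine the difference tensor $B=\nabla'-\nabla$ from the Koszul identity $2g'(B(X,Y),Z)=(\nabla_X g')(Y,Z)+(\nabla_Y g')(X,Z)-(\nabla_Z g')(X,Y)$, computing $\nabla g'$ from \eqref{nablaeta} and $d\sigma=-2\alpha\sigma\,\eta$. The key output is the contraction with $\xi$: a short calculation gives $B(X,\xi)=-\alpha(X-\eta(X)\xi)$, whence, combining with $\nabla_X\xi=\alpha(X+h'X-\eta(X)\xi)$,
\[
\nabla'_X\xi=\alpha\,h'X.
\]
Since $\nabla'$ is torsion-free, the term $\alpha h'(\nabla'_X Y-\nabla'_Y X-[X,Y])$ vanishes, giving $R'_{XY}\xi=\alpha\big((\nabla'_X h')Y-(\nabla'_Y h')X\big)$.

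It then remains to evaluate the antisymmetrized derivative. Writing $(\nabla'_X h')Y=(\nabla_X h')Y+B(X,h'Y)-h'(B(X,Y))$ and using the full expression for $B$ together with the symmetry of $h'$ (so that the $\xi$-valued symmetric pieces drop out on antisymmetrization), one finds
\[
(\nabla'_X h')Y-(\nabla'_Y h')X=\big((\nabla_X h')Y-(\nabla_Y h')X\big)+\alpha(\eta(Y)h'X-\eta(X)h'Y).
\]
By Theorem \ref{integrability} the operator $h'$ is $\eta$-parallel with $\nabla_\xi h'=0$, so \eqref{nablahsym_0} replaces the first bracket by $\alpha(\eta(X)(h'Y+h'^2Y)-\eta(Y)(h'X+h'^2X))$; the $h'$-terms cancel the correction, leaving $\alpha(\eta(X)h'^2Y-\eta(Y)h'^2X)$. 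Finally, since $Spec(h')=\{0,\lambda,-\lambda\}$ with $0$ simple, $h'^2=\lambda^2(I-\eta\otimes\xi)$, so
\[
R'_{XY}\xi=\alpha^2\lambda^2(\eta(X)Y-\eta(Y)X)=-\alpha^2\lambda^2(\eta(Y)X-\eta(X)Y),
\]
and recalling $\lambda^2=-1-\kappa/\alpha^2$ gives $\kappa_c=-\alpha^2\lambda^2=\kappa+\alpha^2$, as claimed. The main obstacle is the bookkeeping in computing $B$ and in the cancellation of the $\xi$-valued and $\sigma$-dependent terms; a reassuring check is that the final curvature is independent of $\sigma=e^{-2\alpha t}$, as it must be for a nullity condition with constant $\kappa_c$. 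Alternatively one could substitute the explicit warped-product coordinates of Proposition \ref{local_prop} (with all $\lambda_i=\lambda$), in which $g'$ collapses to $dt^2+\sum e^{2\alpha\lambda t}dx_i^2+\sum e^{-2\alpha\lambda t}dy_i^2$, and read off $R'_{XY}\xi=-\alpha^2\lambda^2(\eta(Y)X-\eta(X)Y)$ directly from the Christoffel symbols.
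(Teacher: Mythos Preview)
Your proof is correct, but it follows a genuinely different route from the paper. The paper first observes (as you do) that $\Phi'=e^{-2\alpha t}\Phi$ is closed; it then invokes Proposition~\ref{local_prop} to write the structure in the explicit warped coordinates with $\lambda_1=\cdots=\lambda_n=\lambda$, in which
\[
g'=dt\otimes dt+e^{2\alpha\lambda t}\sum_i dx_i\otimes dx_i+e^{-2\alpha\lambda t}\sum_i dy_i\otimes dy_i,
\]
and concludes by citing Dacko's classification of almost cosymplectic manifolds with $\xi$ in the $\kappa$-nullity distribution. In other words, the paper uses exactly the coordinate shortcut you mention only as an alternative at the end, together with an external reference.

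Your argument is instead an intrinsic computation: you determine the difference tensor $B=\nabla'-\nabla$ via Koszul, obtain $\nabla'_X\xi=\alpha h'X$, and then antisymmetrize $(\nabla'_Xh')Y$ to reach $R'_{XY}\xi=\alpha^2(\eta(X)h'^2Y-\eta(Y)h'^2X)$ directly. This is longer in bookkeeping but entirely self-contained: it does not rely on the local model of Proposition~\ref{local_prop} nor on Dacko's result, and it makes transparent \emph{why} the constant is $-\alpha^2\lambda^2$ (the $h'$-terms cancel between the Levi-Civita contribution~\eqref{nablahsym_0} and the $B$-correction, leaving only $h'^2$). The paper's approach, by contrast, is shorter precisely because the heavy lifting has already been done in establishing the normal form~\eqref{local1}--\eqref{local2}.
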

\begin{proof}
The fundamental $2$-form $\Phi'$ associated to the structure $(\varphi,\xi,\eta,g')$ is locally given by $\Phi'=e^{-2\alpha t}\Phi$ and thus $d\Phi'=0$, so that $(\varphi,\xi,\eta,g')$ is an almost cosymplectic structure. By Proposition \ref{local_prop}, for any point $p\in M^{2n+1}$ there exist local coordinates $\{t,x_1,y_1,\ldots,x_n,y_n\}$, such that\[\xi=\frac{\partial}{\partial t},\qquad \eta=dt,\qquad \varphi=e^{2\alpha\lambda t}\sum_{i=1}^ndx_i\otimes\frac{\partial}{\partial y_i}-e^{-2\alpha\lambda t}\sum_{i=1}^ndy_i\otimes\frac{\partial}{\partial x_i},\]
\[g=dt\otimes dt+e^{2\alpha(1+\lambda)t}\sum_{i=1}^ndx_i\otimes dx_i+e^{2\alpha(1-\lambda)t}\sum_{i=1}^ndy_i\otimes dy_i,\]
with $\lambda=\sqrt{-1-\frac{\kappa}{\alpha^2}}$. Hence, the Riemannian metric $g'$ is locally given by
\[g'=dt\otimes dt+e^{2\alpha\lambda t}\sum_{i=1}^ndx_i\otimes dx_i+e^{-2\alpha\lambda t}\sum_{i=1}^ndy_i\otimes dy_i.\]
By a result of P. Dacko \cite{Da}, it follows that $(\varphi,\xi,\eta,g')$ is an almost cosymplectic structure such that $\xi$ belongs to the $\kappa_c$-nullity distribution, with $\kappa_c=-\lambda^2\alpha^2=\kappa+\alpha^2<0$.
\end{proof}

\noindent Dipartimento di Matematica\\
Universit\`a degli Studi di Bari Aldo Moro\\
Via E. Orabona 4, 70125 Bari, Italy\\
dileo@dm.uniba.it
\end{document}